\newtheorem{defi}{Definition}
\newtheorem{theo}{Theorem}
\newtheorem{prop}{Proposition}
\newtheorem{lem}{Lemma}
\newtheorem{coro}{Corollary}
\newtheorem{remark}{Remark}
\title{Formal intercepts of Sturmian words}
\date{Ca\"{\i}us Wojcik}
\begin{document}

\maketitle

\begin{abstract}
We introduce the concept of formal intercept of Sturmian words, defined as an infinite sequence of integers written in Ostrowski expansion. We first recall the combinatorial proofs of basics properties of sturmian words. Then, we study the Rauzy graphs and repetition functions of Sturmian words. In the last part, we define the formal intercept associated to a sturmian word.
\end{abstract}

\bigskip

Sturmian words are defined as the infinite words having lowest unbounded complexity. They enjoy rich combinatorial structures, that are sometimes difficult to quantify. However, unlike most of other dynamical systems, in the case of sturmian words one can hope to find some explicit combinatorial formulas.

In order to describe the combinatorial properties of Sturmian words, we give here a combinatorial description of the second parameter in the caracterisation of sturmian words. The first parameter, well-known, is the slope, which is an irrational number in $]0,1[$, whose continued fraction expansion describes the set of factors of a sturmian word. The second parameter, the intercept, has been defined dynamicaly in the litterature, but its combinatorial implications in the structure of sturmian words were not well understood in the author's view.

In this paper, which is the first of two about formal intercepts, we define the formal intercept of a given Sturmian word, and show that there is a natural bijection between Sturmian words of a given slope, and the formal intercepts associated to this slope. Since we insist on the combinatorial properties of such parameters, and for sake of completeness, we start from the bottom about Sturmian words, and give entierely combinatorial proofs of their basic properties.

The paper is organized as follows. In the first section, we recall the proofs of basic properties of sturmian words. In the second part of the paper, we give a description of the factor graph and some links with the repetition function of Sturmian words. In the last part, we define the formal intercept of Sturmian word.

\section{Basic properties}

In this section we give combinatorial proofs of the main properties about sturmian words. All these come from the classical book \cite{lothaire}, rewritten in a concise manner.

\subsection{The Morse-Hedlund theorem}

Let $A$ be a finite set, the alphabet. A finite ord over $A$ is an element of the union $\cup_n A^n$, and if $u\in A^n$, we set $|u|=n$ and call it its length. For a letter $a\in A$ we note $|u|_a$ the number of occurrences of the letter $a$ in $u$.

An infinite word $x=x_1x_2x_3\ldots$ is an element of $A^{\mathbb{N}}$. A factor of $x$ is a finite word occurring in $x$. For such an infinite word, we note $\mathbb{P}_n(x)=x_1x_2\ldots x_n$ its prefix of length $n$, and we note $T(x)=x_2x_3x_4 \ldots $ the shifted of $x$, which consists of the infinite word $x$ deprived of its first letter. A suffix of $x$ is an element of the form $T^k(x)$ for some $k\geq 1$.

\begin{defi}
Let $x=x_1x_2x_3\ldots$ be an infinite word. For $n\geq 1$, set
\begin{center}
	$p(x,n)=Card\{x_ix_{i+1}\ldots x_{i+n-1}\ | \ i\geq 1\}$
\end{center}
and call $p(x,\cdot)$ the complexity function of $x$.
\end{defi}

\begin{theo}[Morse-Hendlund]
Let $x$ be an infinite word over $A$. Then $x$ is ultimately periodic if and only if there exists $n\geq 1$ such that $p(x,n)\leq n$.
\end{theo}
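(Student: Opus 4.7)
\emph{Plan.} I would treat the two implications separately. The forward direction is a direct counting argument, and the converse rests on a monotonicity-plus-saturation lemma for the complexity function, combined with a determinism observation.

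For $(\Rightarrow)$, suppose $x$ is ultimately periodic with pre-period $N$ and period $p$, i.e.\ $x_{k+p} = x_k$ for all $k > N$. Any factor of length $n$ starts at some position $i \ge 1$; if $i > N$, the factor depends only on $i \bmod p$, and there are at most $N$ possible starting positions with $i \le N$. Hence $p(x, n) \le N + p$ for every $n$, and any $n \ge N + p$ yields $p(x, n) \le n$.

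For $(\Leftarrow)$, I would first prove monotonicity: $p(x, n+1) \ge p(x, n)$, because every length-$n$ factor admits at least one right-extension inside $x$ (namely, the continuation in $x$). The key lemma is then: if $p(x, n+1) = p(x, n)$ for some $n$, then $x$ is ultimately periodic. Indeed, under this equality the natural surjection from length-$(n+1)$ factors to length-$n$ factors (truncating the last letter) becomes a bijection, so each length-$n$ factor $u$ admits a unique letter $f(u)$ with $uf(u)$ a factor. Consequently $x_{i+n+1} = f(x_{i+1}\ldots x_{i+n})$ for every $i \ge 0$, and by pigeonhole two distinct positions $i < j$ carry the same length-$n$ factor; the recursion propagates this equality forward, giving $x_{m+(j-i)} = x_m$ for all $m > i$, whence ultimate periodicity with period dividing $j-i$.

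To close: assume $p(x,n) \le n$. If $p(x,1) = 1$ then $x$ is constant and trivially periodic. Otherwise $p(x,1) \ge 2$, and strict monotonicity $p(x,k+1) \ge p(x,k) + 1$ at every step $k < n$ would force $p(x,n) \ge p(x,1) + (n-1) \ge n+1$, contradicting the hypothesis; hence saturation $p(x,k+1) = p(x,k)$ occurs at some $k < n$, and the key lemma applies. The main (though modest) obstacle is the key lemma, specifically verifying that the extension map $f$ is defined on \emph{every} length-$n$ factor and is genuinely single-valued under the equality hypothesis: this uses crucially that $x$ is infinite, so every occurring factor has an explicit continuation, together with the bijectivity forced by the cardinality equality $p(x, n+1) = p(x, n)$.
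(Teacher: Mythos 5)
Your proof is correct and follows essentially the same route as the paper: bounded complexity for the necessary direction, and for the converse the saturation $p(x,k+1)=p(x,k)$ forcing unique right extensions, a repeated factor by pigeonhole, and forward propagation of the equality of suffixes. You are somewhat more explicit than the paper about why saturation must occur (handling the $p(x,1)=1$ case and the strict-growth contradiction), but this is a refinement of the same argument rather than a different approach.
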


\begin{proof}
The condition is clearly necessary, since a ultimately periodic word has bounded complexity function. For the converse, from the increasing of the complexity function and the pigeonhole principle, there exists $n\in \mathbb{N}^*$ such that $p(x,n)=p(x,n+1)$. That means that every factor $u$ of $x$ can be only uniquely extend on the right. Let $u$ be a factor of $x$ having two occurrences in $x$. The two corresponding suffixes of $x$ are uniquely determined by $u$, and hence are equal, showing that $x$ is ultimately periodic.
\end{proof}

This theorem can be reformulated as follows : if $x$ is a non-ultimately periodic word, then $p(x,n)\geq n+1$ for all $n\geq 1$.

\begin{defi}
An infinite word is said to be Sturmian if
\begin{center}
	$\forall n\geq 1$, $p(x,n)=n+1$.
\end{center}
\end{defi}

Notice that a Sturmian word $x$ is a word over a $2$-letter alphabet, since $p(x,1)=2$, so we assume from now on that $A=\{0,1\}$. Also, notice that since $T(x)$ is non-ultimately periodic, $T(x)$ and $x$ share the same set of factors.

For all $n\geq 1$, there exists a unique factor $L_n$ of $x$ of length $n$ such that both $0L_n$ and $1L_n$ are factor of $x$, called the left special factor of $x$ of length $n$. Similarly, define the right special factor $R_n$
of $x$ as the unique factor of $x$ of length $n$ such that both $R_n0$ and $R_n1$ are factors of $x$.

\subsection{Slopes and balanced words}

\begin{defi}
A word over $A=\{0,1\}$ is said to be balanced if for all pair of factors $u,v$ with $|u|=|v|$ we have
\begin{center}
	$||u|_1-|v|_1|\leq 1$.
\end{center}
\end{defi}

\begin{lem}
A word $x$ is balanced if and only if for all palindrome $w$, $0w0$ and $1w1$ are not both factors of $x$.
\end{lem}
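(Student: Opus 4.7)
The forward implication is immediate: if some palindrome $w$ has both $0w0$ and $1w1$ as factors of $x$, these are factors of the same length with $|1w1|_1 - |0w0|_1 = 2$, violating balance. For the converse I argue the contrapositive: assuming $x$ is not balanced, I exhibit a palindrome $w$ with both $0w0$ and $1w1$ factors of $x$.

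Shifting a length-$n$ window across $x$ changes $|\cdot|_1$ by at most one at each step, so by a discrete intermediate-value argument, if two length-$n$ factors have counts differing by at least $2$, there already exist two differing by exactly $2$. Pick such a pair $(u,v)$ of common length $n$ with $|u|_1 - |v|_1 = 2$ and with $n$ minimal. By the minimality of $n$, deleting a shared first or last letter is forbidden, and removing $(u_1,v_1) = (0,1)$ from the left (or the analogous configuration on the right) would only increase the difference; hence $u_1 = u_n = 1$ and $v_1 = v_n = 0$, i.e.\ $u = 1U1$ and $v = 0V0$ with $|U| = |V| = n-2$ and $|U|_1 = |V|_1$.

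The main work is to show that $U = V$ and that this common word is a palindrome. Suppose first $U \ne V$ and let $i$ be the first index of disagreement, with $W = U_1 \cdots U_{i-1}$ the common prefix. If $U_i = 1$ and $V_i = 0$, the length-$(i+1)$ prefixes of $u$ and $v$ are $1W1$ and $0W0$, a shorter unbalanced pair contradicting minimality. If instead $U_i = 0$ and $V_i = 1$, then $|U|_1 = |V|_1$ forces a compensating later difference (so $i \le n-3$), and a direct count shows the length-$(n-i-1)$ suffixes $U_{i+1}\cdots U_{n-2}\,1$ of $u$ and $V_{i+1}\cdots V_{n-2}\,0$ of $v$ differ in $|\cdot|_1$ by $2$ --- again a contradiction. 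Hence $U = V =: w$. Now suppose $w$ is not a palindrome; let $j$ be the first index with $w_j \ne w_{k+1-j}$, where $k = |w|$, and set $P = w_1 \cdots w_{j-1}$, so the palindromic matching of the first and last $j-1$ letters of $w$ gives $w_{k+2-j}\cdots w_k = w_{j-1}\cdots w_1$, which I write as $\widetilde{P}$. If $w_j = 1$ and $w_{k+1-j} = 0$, the length-$(j+1)$ prefix of $1w1$ is $1P1$ while the length-$(j+1)$ suffix of $0w0$ is $0\widetilde{P}0$, a shorter unbalanced pair; the case $w_j = 0$, $w_{k+1-j} = 1$ is handled symmetrically using the prefix of $0w0$ and the suffix of $1w1$. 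Either way minimality is contradicted, so $w$ is a palindrome and $(u,v) = (1w1,0w0)$ provides the required palindromic sandwich. The subtle point throughout is that the $0 \leftrightarrow 1$ symmetry of the problem does not commute with reading prefixes versus suffixes, which is why each case split --- in the $U \ne V$ step and again in the palindrome-failure step --- must be resolved separately rather than appealed to by symmetry.
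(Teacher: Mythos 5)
Your proof is correct and follows essentially the same strategy as the paper's: take a minimal-length unbalanced pair, show it must have the form $\{0w0,1w1\}$, and then prove $w$ is a palindrome by locating the first index where $w$ and its reversal disagree and extracting a shorter unbalanced pair. The only cosmetic differences are that you normalize the defect to exactly $2$ via the sliding-window argument and peel off both end letters before comparing the middles, whereas the paper works with the longest common prefix of $u^*$ and $v^*$ directly.
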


\begin{proof}
Note that the direct implication holds for any word $w$. For the converse, we take $u$ and $v$ of minimal length such that $||u|_1-|v|_1|> 1$, and we show that $\{u,v\}=\{0w0,1w1\}$ for some palindrome $w$.

The minimality hypothesis implies that $u$ and $v$ start with different letters, and let $w$ be the longest common prefix of $u^*$ and $v^*$, where the star denotes a word deprived of its first letter. Write
\begin{center}
	$u=awcu'$ \quad and \quad $v=bwdv'$
\end{center}
with $a,b,c,d\in\{0,1\}$ and $a\neq b$. Since the couple $(u',v')$ cannot satisfy the balanced property by minimality, we must have $c=a$ and $d=b$. From this the words $u'$ and $v'$ must be empty, $0w0$ and $1w1$ factors of $x$.

In order to show that $w$ is a palindrome, let $t$ be the longest common prefix of $w$ and $\tilde{w}$ where the tilda denotes the reversal of a word and assume by contradiction that $t\neq w$. Let $a$ be the letter following $t$ in $w$ and $\bar{a}$ the opposite letter. If $a=0$ then $0t0$ is a prefix of $0w0$ and $1\tilde{t}1$ is a suffix of $1w1$, contradicting the minimality hypothesis. If $a=1$ then writing $0w0=0t1u'$ and $1w1=v'0\tilde{t}1$ offers a pair $u',v'$ contradicting the minimality hypothesis.
\end{proof}

\begin{theo}
\begin{enumerate}[1)]
	\item An infinite balanced word satisfies $\forall n \in\mathbb{N}^*, \ p(x,n)\leq n+1$.
	\item An infinite word over $A=\{0,1\}$ is Sturmian if and only if he is balanced and non-ultimately periodic.
\end{enumerate}
\end{theo}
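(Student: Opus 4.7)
My plan is to prove (1) first, by induction on $n$, and then to derive both directions of (2) from it.

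For (1), the base $p(x,1)\le 2$ is just $|A|=2$. Since $x$ is an infinite word, every factor of $x$ can be extended to the right inside $x$, so $p(x,n+1)-p(x,n)$ equals the number of right special factors of length $n$. It therefore suffices to show that in a balanced word there is at most one right special factor of each length. Suppose for contradiction that $u\ne v$ are two right special factors of length $n$; let $w$ be their longest common suffix and, up to swapping $u$ and $v$, assume that the letter of $u$ immediately before $w$ is $0$ while that of $v$ is $1$. Right specialness then gives that $0w0$, $0w1$, $1w0$, $1w1$ are all factors of $x$. In particular $0w0$ and $1w1$ have the same length while their numbers of $1$'s differ by $2$, contradicting the balance of $x$. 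Feeding this back into the induction yields $p(x,n)\le n+1$ for all $n$.

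The direction ``balanced and non-ultimately periodic $\Rightarrow$ Sturmian'' in (2) is then immediate: (1) gives $p(x,n)\le n+1$, while the Morse--Hedlund theorem gives $p(x,n)\ge n+1$, hence $p(x,n)=n+1$.

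For the reverse direction, non-ultimate periodicity again comes from Morse--Hedlund, so the task is to show Sturmian $\Rightarrow$ balanced. I would proceed by contradiction: assume $x$ is Sturmian but not balanced, and apply the palindrome lemma to produce a palindrome $p$ such that both $0p0$ and $1p1$ are factors of $x$. This makes $p$ both left and right special, so by Sturmianity it is the unique bispecial factor of length $|p|$. I would then push the uniqueness up one level of length: $L_{|p|+1}$ must be of the form $p\cdot c$ and $R_{|p|+1}$ of the form $c'\cdot p$, with both being unique at length $|p|+1$. Combined with the palindromic symmetry $\tilde{p}=p$ and with the presence of both $0p0$ and $1p1$, these nesting constraints should collapse.

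The main obstacle I expect is precisely that last step: extracting a combinatorial contradiction from the interplay between the palindromic symmetry of $p$ and the asymmetric layout forced by the one-dimensional complexity growth. The cleanest route is probably either to first prove that the factor set of any Sturmian word is closed under reversal---so that $0p1$ being a factor forces $1p0$ to be a factor and vice versa---or to use the bispecial second-difference identity $p(x,n+2)-2p(x,n+1)+p(x,n)=0$ at $n=|p|$ to control the bilateral multiplicity of the unique bispecial $p$ and exclude that both $0p0$ and $1p1$ appear among its bilateral extensions.
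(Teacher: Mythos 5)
Your part 1) is correct and is essentially the paper's argument transposed from left special to right special factors: two distinct special factors of the same length would produce, via their longest common suffix $w$, both $0w0$ and $1w1$ as factors, violating balance. The forward direction of 2) is the same combination of part 1) with Morse--Hedlund as in the paper.

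The reverse direction, Sturmian $\Rightarrow$ balanced, is where your proposal has a genuine gap. You correctly reduce to a palindrome $p$ with $0p0$ and $1p1$ both factors and identify $p$ as the unique bispecial factor of its length, but the promised ``collapse'' is never carried out, and neither of your two escape routes closes it. The first route (reversal-closure of the factor set) would indeed finish instantly---exactly one of $0p,1p$ is right special, hence exactly one of $0p1,1p0$ is a factor, while $\widetilde{0p1}=1\tilde{p}0=1p0$---but in this paper reversal-closure (Proposition 2, item 4) is deduced from Theorem 3, which itself rests on the present theorem; you would need an independent proof of reversal-closure from the complexity hypothesis alone, and none is available at this stage. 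The second route fails on its own terms: the identity $p(x,n+2)-2p(x,n+1)+p(x,n)=0$ at $n=|p|$ only says that the unique bispecial factor $p$ has exactly three bilateral extensions, and the configuration $\{0p0,\,0p1,\,1p1\}$---which is precisely what the paper shows does occur---has three, so no contradiction can be extracted from the second difference. The paper's actual mechanism is different in kind: it takes a factor $u=1p1v$ with $|v|=|p|$, uses the palindromicity of $p$ to show that the unique right special factor of length $|p|+1$ (namely $0p$) cannot occur inside $u$, applies the pigeonhole principle to the $|p|+2$ windows of length $|p|+1$ in $u$ to find a repeated, non-right-special factor, and propagates the resulting right-determinism to conclude that $x$ is ultimately periodic, contradicting Sturmianity. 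That repetition-and-periodicity argument is the missing idea in your sketch.
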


\begin{proof}
1) Let $x$ be a balanced word and suppose that there is $n\in\mathbb{N}^*$ such that $p(x,n+1) \geq p(x,n)+2$. Then $x$ has two distincts left special factors $y$ and $z$, and let $w$ be their longest common prefix. Then both $0w0$ and $1w1$ are factors of $x$, in contradiction with the balanced hypothesis.

2) A non-ultimately periodic word satisfy $\forall n \in\mathbb{N}^*, \ p(x,n)\geq n+1$ by the Morse \& Hedlund theorem. Combined with $1)$, we get the sufficient condition.

For the converse, we have to show that a Sturmian word $x$ is necessarily  balanced. Arguing by contradiction and using the Lemma, we assume that there is a palindrome $w$ such that both $0w0$ and $1w1$ are factors of $x$. The word $w$ is a right special factor of $x$, and one of the words $0w$ or $1w$ is a right special factor, and we assume without loss of generality that $0w$ is a right special factor. By the unicity of right special factors of a given length for Sturmian words, $1w$ is not a right special factor. So the three words $0w0$, $1w1$ and $0w1$ are factors of $x$ although $1w0$ is not.

Let $u=1w1v$ be a factor of $x$ with the prefix $1w1$ and $|v|=|w|$. We show that the right special factor of $x$ of length $|w|+1$, namely $0w$, is not a factor of $u$. Suppose this is the case and write the word $u$ as :
\begin{center}
	$u=1w1v=\lambda 0w \mu$
\end{center}
and let $t$ be the word such that $u=\lambda 0 t 1 v$. The word $t$ is both a prefix and a suffix of $w$, and we also see that $t1$ is a prefix of $w$ and that $0t$ is a suffix of $w$. But since $w$ is a palindrome, the $(|t|+1)^{\underline{th}}$ letter of $w$ is both a 0 and a 1, and that's a contradiction.

We've just shown that no factor of $u=1w1v$ of length $n=|w|+1$ is a right special factor. Since there is at most $n$ such factors, and $|u|=2n$, there is a factor $\nu$ of length $n$ in $u$ that occurs twice. Let's show that this implies that $x$ is ultimately periodic, hence proving the theorem. Let's note $y$ the suffix of $x$ beginning at the first occurrence of $\nu$, and $z$ the suffix of $x$ beginning at its second occurrence. The two infinite words $y$ and $z$ share the same prefix of length $n$, but since this prefix is not a right special factor, its following letter in $y$ and $z$ is uniquely determined, so that $y$ and $z$ share the same prefix of length $n+1$. And the letter following this prefix is also uniquely determined since the preceeding word of length $n$ appears in $u$ and so is not a right special factor. This argument goes on and on, so that we must have $y=z$. There are two suffixes of the infinite word $x$, taken at different starting point, that are equal. This shows that $x$ is ultimately periodic.

\end{proof}

The following proposition is an extension of the balanced property.

\begin{prop}
A word $x$ is balanced if and only if for all factors $u,v$ of $x$ we have
\begin{center}

	$\displaystyle \left| \frac{|u|_1}{|u|}-\frac{|v|_1}{|v|} \right|<\frac{1}{|u|}+\frac{1}{|v|}$.

\end{center}
\end{prop}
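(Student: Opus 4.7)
The converse is immediate: specializing the inequality to factors of equal length $|u|=|v|=k$ gives $||u|_1-|v|_1|<2$, hence $\le 1$ since the quantities are integers, which is precisely the balanced condition.

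For the direct implication, I rewrite the desired bound as $|n|u|_1 - m|v|_1| < m+n$ where $m=|u|$ and $n=|v|$. The first step is to observe that the balanced hypothesis forces, for each length $\ell$, the values $|w|_1$ taken by factors $w$ of $x$ of length $\ell$ to lie in a pair of consecutive integers $\{\mu_\ell,\mu_\ell+1\}$; I then write $|u|_1=\mu_m+\epsilon_u$ and $|v|_1=\mu_n+\epsilon_v$ with $\epsilon_u,\epsilon_v\in\{0,1\}$.

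The core of the proof is a double counting. Pick any factor $w$ of $x$ of length $mn$ (possible since $x$ is infinite) and decompose it both as a concatenation of $n$ consecutive blocks of length $m$ and as a concatenation of $m$ consecutive blocks of length $n$. Let $a\in[0,n]$ be the number of blocks in the first decomposition realising the value $\mu_m+1$, and $b\in[0,m]$ the analogue for the second; counting $|w|_1$ in two ways yields $|w|_1=n\mu_m+a=m\mu_n+b$, so $D = b-a = n\mu_m-m\mu_n$ is an integer in $[-n,m]$ depending only on $x,m,n$ (not on $w$). The desired quantity then rewrites as
\[
n|u|_1-m|v|_1 \;=\; D + n\epsilon_u - m\epsilon_v,
\]
which a priori takes values in $[-(m+n),\,m+n]$.

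The main obstacle is upgrading this to a strict inequality at the extremes. The value $m+n$ would force $\epsilon_u=1$, $\epsilon_v=0$ and $D=m$; but $D=m$ means $b-a=m$ for every length-$mn$ factor, hence $a=0$ for every such factor. The key trick is that $\epsilon_u=1$ means $u$ itself is a length-$m$ factor realising $\mu_m+1$ ones, so by choosing $w$ to begin at an occurrence of $u$ in $x$, the first length-$m$ block of $w$ is $u$ and thus $a\ge 1$, a contradiction. The extreme $-(m+n)$ is ruled out symmetrically by choosing $w$ to begin at an occurrence of $v$ when $\epsilon_v=1$. The two remaining sign patterns $(\epsilon_u,\epsilon_v)\in\{(0,0),(1,1)\}$ give the restricted ranges $[-n,m]$ and $[-m,n]$, both already strictly contained in $(-(m+n),m+n)$, which closes the argument.
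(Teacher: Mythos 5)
Your proof is correct, and it follows a genuinely different route from the paper's. The paper handles the forward direction by induction on $\max(|u|,|v|)$: writing $u=st$ with $|s|=|v|$, it combines the equal-length balanced bound for the pair $(s,v)$ with the induction hypothesis for $(t,v)$ via the weighted-average identity $\frac{|u|_1}{|u|}-\frac{|v|_1}{|v|}=\frac{|s|}{|u|}\bigl(\frac{|s|_1}{|s|}-\frac{|v|_1}{|v|}\bigr)+\frac{|t|}{|u|}\bigl(\frac{|t|_1}{|t|}-\frac{|v|_1}{|v|}\bigr)$, which telescopes exactly to the bound $\frac{1}{|u|}+\frac{1}{|v|}$. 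You instead argue globally: tiling a single factor of length $mn$ by blocks of both lengths identifies the constant $D=n\mu_m-m\mu_n\in[-n,m]$ and reduces the quantity to $D+n\epsilon_u-m\epsilon_v$, and your anchoring trick (starting $w$ at an occurrence of $u$, resp.\ of $v$) is exactly what is needed to exclude the two extreme values $\pm(m+n)$; I checked all four sign patterns and each lands strictly inside the required interval. The trade-offs: the paper's induction is shorter and applies verbatim to finite balanced words, whereas your double counting needs a length-$mn$ factor beginning at a prescribed occurrence and hence uses that $x$ is infinite (the intended setting here, which you do flag). In return your argument yields more structural information -- it shows that $|v|\,|u|_1-|u|\,|v|_1$ takes at most four values determined by the two lengths alone -- which is closer in spirit to the Ostrowski-type bookkeeping of the later sections of the paper.
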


\begin{proof}
The sufficient condition is clear by taking two words of the same length. Conversely, we show the result by induction on $\max{|u|,|v|}$. If $|u|=|v|$ then the result follows directly from the balanced property. If $|u|>|v|$, write $u=st$ with $|s|=|v|$. From the balanced property on the one hand, and the induction hypothesis on the other, we have :
\begin{center}
	$\displaystyle \left| \frac{|s|_1}{|s|}-\frac{|v|_1}{|v|} \right|\leq\frac{1}{|v|}$
	
	\bigskip
	
	$\displaystyle \left| \frac{|t|_1}{|t|}-\frac{|v|_1}{|v|} \right|<\frac{1}{|t|}+\frac{1}{|v|}$
	
\end{center}
we also have by simple calculation :
\begin{center}
	$\displaystyle  \frac{|u|_1}{|u|}-\frac{|v|_1}{|v|} = \frac{|s|}{|u|}\left(\frac{|s|_1}{|s|}-\frac{|v|_1}{|v|}\right)+\frac{|t|}{|u|}\left(\frac{|t|_1}{|t|}-\frac{|v|_1}{|v|}\right)$
\end{center}
so that
\begin{center}
	$\displaystyle  \left|\frac{|u|_1}{|u|}-\frac{|v|_1}{|v|}\right| < \frac{|s|}{|u|}\times\frac{1}{|v|}+\frac{|t|}{|u|}\left(\frac{1}{|t|}+\frac{1}{|v|}\right)=\frac{1}{|u|}+\frac{1}{|v|}$
\end{center}
wich ends the proof.
\end{proof}

This proposition shows that the family of numbers $(|u|_1/|u|)$ behave like a Cauchy sequence when $u$ runs through the factors of a balanced word $x$. Hence we can define :

\begin{defi}
Let $x$ be a balanced word. We define the slople $\alpha$ of $x$ as the number
\begin{center}
	$\displaystyle\lim_{|u|\rightarrow +\infty}\frac{|u|_1}{|u|}$
\end{center}
where the limit is taken over the factors of $x$.
\end{defi}

By using this definition in the second caracterisation of balanced word, we get the speed relation :
\begin{center}
	$\displaystyle \left|\frac{|u|_1}{|u|}-\alpha\right|\leq \frac{1}{|u|}$
\end{center}
for every factor $u$ of a balanced word of slope $\alpha$.

If a word is ultimately periodic, then its slope is a rational number. The following theorem shows that the converse is true for balanced word.

\begin{theo}
\begin{enumerate}[1)]
	\item A balanced word is Sturmian if and only if its slope is irrational.
	\item Two balanced words of different slope only share a finite number of factors.
	\item Two Sturmian words of same slope have same set of factors.
\end{enumerate}
\end{theo}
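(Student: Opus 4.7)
The plan is to address the three points in the order 2), 1), 3), increasing in difficulty.

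For 2), if $u$ is a common factor of two balanced words of slopes $\alpha$ and $\alpha'$, the speed relation applied in each word and the triangle inequality give
\[
|\alpha-\alpha'|\ \le\ \left|\alpha-\tfrac{|u|_1}{|u|}\right|+\left|\tfrac{|u|_1}{|u|}-\alpha'\right|\ \le\ \tfrac{2}{|u|},
\]
so $|u|\le 2/|\alpha-\alpha'|$, bounding the length and hence the number of common factors.

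For 1), by the previous theorem (Sturmian $\Leftrightarrow$ balanced and non-ultimately-periodic) it suffices to show that within balanced words, non-ultimate-periodicity is equivalent to irrationality of the slope. The direction ultimately periodic $\Rightarrow$ rational slope is immediate. For the converse, assume $x$ is balanced of slope $p/q$ in lowest terms and set $c_i=|x_i x_{i+1}\cdots x_{i+q-1}|_1$. The speed relation at length $q$ combined with the balance property forces $c_i\in\{p-1,p\}$ or $c_i\in\{p,p+1\}$. Applying the speed relation to $x_{j+1}\cdots x_{j+Nq}$ for every $N$ and every $j\in\{0,\dots,q-1\}$ gives
\[
\Bigl|\sum_{k=0}^{N-1} c_{j+kq+1}-Np\Bigr|\le 1,
\]
so in each arithmetic progression modulo $q$ at most one value differs from $p$; hence at most $q$ indices $i$ satisfy $c_i\ne p$. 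Once $c_i=p$ stabilizes, the telescoping identity $c_{i+1}-c_i=\mathbf{1}_{\{x_{i+q}=1\}}-\mathbf{1}_{\{x_i=1\}}$ forces $x_{i+q}=x_i$ for $i$ large, so $x$ is ultimately periodic of period $q$, contradicting the Sturmian assumption.

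Statement 3) is the main obstacle, since a priori the set of factors depends on the particular word rather than only on the slope. The plan is an induction on $n$ maintaining simultaneously $F_n(x)=F_n(y)$ and the coincidence of the left-special and right-special factors $L_n$ and $R_n$. Given these at level $n$, the factors of length $n+1$ of $x$ decompose, from the left-extension viewpoint, as $\{0L_n,1L_n\}\cup\{a_u u : u\in F_n\setminus\{L_n\}\}$ with $a_u\in\{0,1\}$ the unique left-extension letter of $u$ in $x$. The crux is to show $a_u^x=a_u^y$: using the irrationality of $\alpha$ from 1), the speed relation pins the $1$-count of any length-$(n+1)$ factor to $\lfloor(n+1)\alpha\rfloor$ or $\lceil(n+1)\alpha\rceil$, and one argues that a discrepancy $a_u^x\ne a_u^y$ would produce, together with the already known length-$(n+1)$ factors issuing from $L_n$, a pair of factors violating the balance property. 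Carrying $L_{n+1}^x=L_{n+1}^y$ through the induction relies on the fact that $L_{n+1}$ admits $L_n$ as a prefix, so only the appended letter needs controlling, and this letter is pinned by the same $1$-count constraint. I expect the hardest step to be precisely verifying that the balance condition rules out any discrepancy $a_u^x\ne a_u^y$ ; doing so cleanly may require exploiting also the right-special data and the mirror symmetry of the factor set of a Sturmian word.
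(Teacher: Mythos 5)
Parts 1) and 2) of your proposal are correct. Part 2) is the paper's argument verbatim (speed relation plus triangle inequality). Part 1) takes a genuinely different route: the paper splits into two cases (either every length-$q$ factor has exactly $p$ ones, or infinitely many have $p+1$, the latter case being killed by exhibiting a pair of factors saturating the strict inequality of Proposition 1), whereas you run a telescoping argument on the window counts $c_i$. Your argument is sound: balance plus the speed relation force all the deviations $c_i-p$ to lie in $\{-1,0\}$ or all in $\{0,1\}$, the bounded partial sums along each residue class mod $q$ then allow at most one nonzero deviation per class, and $c_{i+1}=c_i=p$ forces $x_{i+q}=x_i$. Arguably this is cleaner and more uniform than the paper's case split. (Minor wording slip: at the end you say ``contradicting the Sturmian assumption'' when the conclusion you actually need at that point is simply that $x$ is ultimately periodic.)

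Part 3) has a genuine gap, and you flag it yourself: the step ``a discrepancy $a_u^x\neq a_u^y$ would violate balance'' is the whole content of the claim and is not carried out. It is not clear it can be carried out as stated, because $0u$ would be a factor of $x$ and $1u$ a factor of $y$, and the balance property constrains pairs of factors of a \emph{single} word; the speed relation leaves two admissible values for the $1$-count of a length-$(n+1)$ factor, so it does not pin down $a_u$ for a generic non-left-special $u$. The idea you are missing is the paper's cardinality trick, which makes the extension-by-extension bookkeeping unnecessary. The speed relation applied to the two factors $0L_{n+1}$ and $1L_{n+1}$ \emph{does} pin down $|L_{n+1}|_1$ uniquely (it must satisfy $(n+2)\alpha-1<|L_{n+1}|_1<(n+2)\alpha$), and since left special factors are prefixes of one another, an induction gives $L_n(x)=L_n(y)$ for all $n$ --- this much is essentially the part of your plan that works. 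Now set $c_\alpha=\lim L_n$; it is balanced with irrational slope, hence Sturmian by part 1). Every factor of $c_\alpha$ of length $n$ is a factor of some $L_m(x)$, hence of $x$; the set of length-$n$ factors of $c_\alpha$ is therefore a subset of that of $x$, and both have cardinality exactly $n+1$, so they are equal. The same holds for $y$, and the conclusion follows. The inclusion-plus-equal-cardinality argument is what lets you avoid ever deciding the extension letter $a_u$ of an individual factor.
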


\begin{proof}
1) Suppose that $\alpha=p/q$ is the slope of a Sturmian word $x$, with $p,q$ integers. Assume first that for every factor $u$ of $x$ such that $|u|=q$ we have $|u|_1=p$. If $w$ is any factor of $x$ of length $q+1$, then by assumption, the prefix and suffix of length $q$ have the same number of $1$'s, so that $w$ must begin and end with the same letter, showing that $x$ is ultimately periodic. Suppose now that there is an infinity of factors $u$ of $x$ such that $|u|=q$ and $|u|\neq p$. By the balanced property and without loss of generality we can assume $|u|_1=p+1$ for an infinity of such factors. Let $u$ and $v$ be two non-crossing such factors and $w=uzv$ a factor of $x$. From the relations :
\begin{center}
	$\displaystyle \left||w|_1-\frac{p}{q}|w|\right|=\left|2+|z|_1-\frac{p}{q}|z|\right|\leq 1$ \quad and \quad $\displaystyle\left||z|_1-\frac{p}{q}|z|\right|\leq 1$
\end{center}
we get
\begin{center}
	$\displaystyle |z|_1-\frac{p}{q}=-1$
\end{center}
then
\begin{center}
	$\displaystyle \left| \frac{|u|_1}{|u|}-\frac{|z|_1}{|z|} \right|=\left| \frac{|u|_1}{|u|}-\frac{p}{q}-\left(\frac{|z|_1}{|z|}-\frac{p}{q}\right) \right|=\left| \frac{1}{q}+\frac{1}{|z|} \right|=\frac{1}{|u|}+\frac{1}{|z|}$
\end{center}
contradicting the strict inequality in the second caracterisation of balanced words.

2) The speed relation implies that for two distincts slopes $\alpha$ and $\beta$, a finite word that is too long cannot be a common factor of two balanced words of slope $\alpha$ and $\beta$.

3) We first show that two Sturmian words have same set of left special factors. Let $x$ and $y$ be two Sturmian words of slope $\alpha$. Write $L_n(x)$ and $L_n(y)$ for the left special factor of length $n\geq 1$ of $x$ and $y$ respectively. From the strict inequality
	\begin{center}
	$\displaystyle \left|\frac{|u|_1}{|u|}-\alpha\right|< \frac{1}{|u|}$
\end{center}
satisfied by the $2$-letters factors
\begin{center}
	$0L_1(x)$, $1L_1(x)$, $0L_1(y)$, $0L_1(y)$, 
\end{center}
we see that $2\alpha$ must lie in the two open balls of radius $1$ and center $|L_1(x)|_1$ and $|L_1(x)|_1+1$, determining uniquely the number $|L_1(x)|_1$ which has to be equal to $|L_1(y)|_1$. Since they are both letters, we have $|L_1(x)|_1=|L_1(y)|_1$. Recall that two left special factors of a Sturmian word are prefix of one another, so that we can prove the result by induction : suppose that $L_{n-1}(x)=L_{n-1}(y)$, and by the same argument we have $|L_n(x)|_1=|L_n(y)|_1$ and so $L_n(x)=L_n(y)$. Let $c_\alpha=\lim L_n(x)=\lim L_n(y)$, then $c_\alpha$ is a balanced word that is not ultimately periodic since its slope is irrationnal, so $c_\alpha$ is a Sturmian word, and by the cardinality of the sets of factors involved, we see that $x$, $y$ and $c_\alpha$ share the same set of factors.
\end{proof}

\begin{defi}
For all Sturmian word of slope $\alpha$, the sequence $(L_n)$ of its left special factors defines a Sturmian word :
\begin{center}
	$c_\alpha=\lim L_n$
\end{center}
which depends only on the slope $\alpha$, noted $c_\alpha$ and called the caracteristic Sturmian word of slope $\alpha$.
\end{defi}

\begin{prop}
Let $(L_n)$ and $(R_n)$ be the sequences of left special factors and right special factors respectively of a Sturmian word. Then :
\begin{enumerate}[1)]
  \item The Sturmian word $x$ is caracteristic if and only if both $0x$ and $1x$ are Sturmian.
	\item $c_\alpha=\lim \widetilde{R_n}$
	\item $\forall n\in\mathbb{N}^*$, $R_n=\widetilde{L_n}$
	\item The set of factors of a Sturmian word is stable under reversal,
	\item For all Sturmian word $x$, at least one of the words $0x$ and $1x$ is Sturmian
\end{enumerate}
\end{prop}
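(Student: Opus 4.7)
The plan is to establish the five items in the order (5), (1), then (3) and (4) jointly, and finally (2): items (5) and (1) share the same extension-counting machinery, (3) follows from (4) by the uniqueness of right-special factors, and (2) is immediate from (3).

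For (5), the key lemma is that every factor of $x$ admits at least one left extension in $x$: the map sending a length-$(n+1)$ factor to its length-$n$ suffix goes from $n+2$ elements to $n+1$, and uniqueness of $L_n$ forces exactly one length-$n$ factor to be hit twice, so no factor has zero left extensions. Assume for contradiction that both $0x$ and $1x$ fail to be Sturmian; each failure is witnessed by a prefix that is not a factor of $x$, say $0P_n(x)$ and $1P_m(x)$ respectively. By the left-extension lemma, $P_n(x)$ is left-extendable in $x$ only by $1$ and $P_m(x)$ only by $0$. Taking $n\leq m$ without loss of generality, the factor $0P_m(x)$ of $x$ has $0P_n(x)$ as its prefix of length $n+1$, contradicting the failure at $n$.

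For (1), the same reasoning shows: $x=c_\alpha$ if and only if every $P_n(x)$ equals $L_n$, if and only if every prefix of $x$ admits both letters as left extensions in $x$, if and only if both $0x$ and $1x$ have all their prefixes among the factors of $x$; combined with non-ultimate-periodicity inherited from $x$, this amounts to both shifts being Sturmian.

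For (3) and (4), I would proceed by induction on $n$ proving jointly that the set of length-$n$ factors of $x$ is stable under reversal and $R_n=\widetilde{L_n}$. The base $n=1$ is a case check on length-$2$ factors: the asymmetric configurations $\{00,01,11\}$ and $\{00,10,11\}$ would force every $1$ to be eventually followed by $1$, or every $0$ by $0$, making $x$ ultimately periodic; only $\{00,01,10\}$ and $\{01,10,11\}$ remain, in each of which $L_1=R_1$. For the inductive step, write a length-$(n+1)$ factor $u$ as $u=av=wb$ with $v,w$ of length $n$; the induction hypothesis supplies $\tilde v$ and $\tilde w$ as factors, together with $\widetilde{L_n}=R_n$. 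If $v=L_n$, then $\tilde v=R_n$ is right-special and $\tilde u=\tilde v a$ is a factor; similarly if $w=R_n$, then $\tilde w=L_n$ is left-special and $\tilde u=b\tilde w$ is a factor. The residual case $v\neq L_n$ and $w\neq R_n$ is the main obstacle: here $v$ has unique left-extension $a$, $w$ has unique right-extension $b$, and one must show that $\tilde w$'s unique left-extension is $b$ (equivalently, $\tilde v$'s unique right-extension is $a$). I would attack this by strengthening the induction hypothesis to also carry the symmetric correspondence "the unique right-extension of a non-right-special factor $w$ equals the unique left-extension of $\tilde w$" at every length, propagating it from shorter lengths using the balance property together with the rigid theta (or figure-eight) structure of the Rauzy graph. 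Once reversal-stability at length $n+1$ holds, $\widetilde{L_{n+1}}$ is right-special and equals $R_{n+1}$ by uniqueness.

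Finally, (2) follows immediately from (3): $\widetilde{R_n}=L_n$ for every $n$, so $\lim\widetilde{R_n}=\lim L_n=c_\alpha$.
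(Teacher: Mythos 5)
Your treatment of items (5) and (1) is sound, and it is in fact a genuine reorganization: the paper derives (5) \emph{from} (4) (using reversal-stability to get left-extendability of prefixes), whereas your counting lemma (summing left-extension degrees over the $n+1$ factors of length $n$ gives $n+2$, and uniqueness of $L_n$ forces the profile $2,1,\dots,1$, so no factor has zero left extensions) makes (5) self-contained, and the final step --- $n\leq m$ forces $0\mathbb{P}_n(x)$ to be a prefix of the factor $0\mathbb{P}_m(x)$ --- is correct. The derivation of (2) from (3) is also fine. The problem is that (3)/(4), the actual core of the proposition, is not proved. You identify the residual case ($v\neq L_n$ and $w\neq R_n$ in $u=av=wb$) as ``the main obstacle'' and propose to close it by strengthening the induction hypothesis to: \emph{the unique right-extension of a non-right-special $w$ equals the unique left-extension of $\widetilde{w}$}. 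But that auxiliary statement at length $n$ is literally equivalent to reversal-stability at length $n+1$ (apply it to $w=\widetilde{v}$), and establishing it at length $n+1$ requires knowing that $\widetilde{wb}=b\widetilde{w}$ is a factor, i.e.\ reversal-stability at length $n+2$ --- the very thing the next inductive step is supposed to produce. The induction does not close; it is circular. Nor can a cardinality argument rescue it: the set of length-$(n+1)$ words whose length-$n$ prefix and suffix are both factors is in general strictly larger than $n+2$ (the word $1w0$ in the proof of Theorem~2 is exactly such a non-factor), and the balance inequality applied to the single pair $av$, $\widetilde{v}\,\bar{a}$ gives a discrepancy of only $1$, hence no contradiction.

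The paper's device for bypassing this is worth internalizing: it proves (2) \emph{before} (3) and (4). Since the $R_n$ are suffixes of one another, $c=\lim\widetilde{R_n}$ is a well-defined infinite word; every factor of $c$, and every prefix of $0c$ and of $1c$ (these are $\widetilde{R_n0}$ and $\widetilde{R_n1}$), is the reversal of a factor of $x$, and the balance condition is invariant under reversal, so $c$, $0c$ and $1c$ are all balanced with irrational slope, hence Sturmian. Item (1) then forces $c=c_\alpha$, after which $R_n=\widetilde{L_n}$ and reversal-stability of the factor set fall out immediately. If you want to keep your ordering, you must replace the inductive attack on (3)/(4) by an argument of this kind (or some other global input, e.g.\ that two Sturmian words of the same slope share their factor sets, applied to $x$ and a word with factor set $\widetilde{F(x)}$); as written, the step you flag as the obstacle is exactly the content of the theorem and remains unproved.
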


\begin{proof}
1) If both $0x$ and $1x$ are Sturmian, then all the prefixes of $x$ are left special, so $x=\lim L_n=c_\alpha$.

2) Since the right special factors are suffixes of one another, the word $c=\lim\widetilde{R_n}$ is well-defined, balanced and of irrational slope, so it is Sturmian. Besides, both $0c$ and $1c$ are Sturmian, so that $c=c_\alpha$ by 1).

3) Obvious since $c_\alpha=\lim L_n=\lim \widetilde{R_n}$.

4) Obvious from $3)$ and the fact that a Sturmian word and the caracteristic word of same slope share the same set of factors.

5) It is clear if $x$ is caracteristic. Let $u$ be a prefix of $x$ that is not left special. Then by $4)$ there is a unique letter $a\in\{0,1\}$ such that $au$ is a factor of $x$, and this letter does not depend on $u$. The word $ax$ is then balanced and non-ultimately periodic, so it is Sturmian.
\end{proof}

\subsection{Caracteristic words and continued fractions}

Recall that every irrational number $\alpha\in ]0,1[$ can be written uniquely in the form
\begin{center}
	$\alpha =[0;a_1,a_2,\ldots]=\displaystyle \cfrac{1}{a_1+\cfrac{1}{a_2+\cfrac{1}{a_3+\ldots}}}$
\end{center}
with $a_i\in\mathbb{N}^*$ for $i\geq 1$. The coefficient $(a_i)$ are called the partial quotient of $\alpha$.

We define the positive integers $p_n$ and $q_n$ as the irreducible quotient
\begin{center}
	$\displaystyle \frac{p_n}{q_n}=[0;a_1,\ldots,a_n]$
\end{center}
and we set $q_{-1}=0$ and $q_0=1$. We call the sequence $(q_n)$ the sequence of continuant of $\alpha$. We have the induction relation
\begin{center}
	$q_{n+1}=a_{n+1}q_n+q_{n-1}$
\end{center}
for $n\geq 0$. Notice that for all $n\geq 0$, $q_{n+1}$ and $q_n$ are relatively prime (the induction steps are the steps of Euclide's algorithm).

\begin{theo}
Let $\alpha=[0;a_1,a_2,\ldots]$ be an irrational number in $]0,1[$. Define the sequence of words :
\begin{center}
	$s_{-1}=1$, \quad $s_0=0$, \quad $s_1=s_0^{a_1-1}s_{-1}$,
	
	\bigskip
	
	$s_{n+1}=s_n^{a_{n+1}}s_{n-1}$
\end{center}
for all $n\geq 1$. Then :
\begin{center}
	$c_{\alpha}=\lim s_n$.
\end{center}
\end{theo}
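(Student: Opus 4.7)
The plan is to show that $w := \lim_{n \to \infty} s_n$ is a Sturmian word of slope $\alpha$ whose prefixes $s_n$ are eventually left special factors; by the definition of $c_\alpha$ as the limit of left special factors of any Sturmian word of slope $\alpha$, this will force $w = c_\alpha$.

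\textbf{Arithmetic step.} A direct induction on $n$ gives $|s_n| = q_n$ and $|s_n|_1 = p_n$, and shows that $s_n$ is a prefix of $s_{n+1}$ for $n \geq 1$. Hence $w$ is well-defined and $|s_n|_1/|s_n| = p_n/q_n \to \alpha$.

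\textbf{Twin product lemma.} The central combinatorial input, which I expect to be the main obstacle of the proof, is the following: for every $n \geq 1$ there exists a palindrome $P_n$ of length $q_n + q_{n-1} - 2$ and letters $\{x, y\} = \{0, 1\}$ such that
\begin{equation*}
s_n s_{n-1} = P_n \, xy \qquad \text{and} \qquad s_{n-1} s_n = P_n \, yx.
\end{equation*}
The base case $n = 1$ is a direct check, and the induction step rests on the identities
\begin{equation*}
s_{n+1} s_n = s_n^{a_{n+1}} (s_{n-1} s_n), \qquad s_n s_{n+1} = s_n^{a_{n+1}}(s_n s_{n-1}),
\end{equation*}
which immediately identify the new common prefix as $s_n^{a_{n+1}} P_n$ and swap the final two letters as required. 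The delicate point is to verify that $s_n^{a_{n+1}} P_n$ is itself a palindrome; this amounts to a careful bookkeeping of the palindromic prefix of $s_n$ and its overlap with $P_n$, and is where the real work lies.

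\textbf{Balance and Sturmianity.} From the twin product lemma I would deduce, by induction on $n$, that every $s_n$ is balanced: by the characterization of balance by forbidden palindromes, any palindrome $v$ with both $0v0$ and $1v1$ occurring in $s_{n+1} s_n$ would, thanks to the palindromic structure of $P_{n+1}$ together with the $s_n^{a_{n+1}}$-block, descend to such a palindrome in $s_n s_{n-1}$, contradicting the induction hypothesis. Since every factor of $w$ lies inside some $s_n$, $w$ itself is balanced; being of irrational slope $\alpha$, it is Sturmian.

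\textbf{Conclusion.} By the twin product lemma, $P_n$ is followed in $w$ by both $x$ and $y$, hence $P_n$ is right special. The factor set of the Sturmian word $w$ being reversal-stable and $\widetilde{P_n} = P_n$, the palindrome $P_n$ is also left special, i.e.\ both $0 P_n$ and $1 P_n$ are factors of $w$. For $n$ large enough one has $q_{n-1} \geq 2$, so $s_n$ is a prefix of $P_n$, and consequently both $0 s_n$ and $1 s_n$ are factors of $w$. By the uniqueness of the left special factor of a given length in a Sturmian word, $s_n = L_{q_n}(w)$, so $s_n$ is a prefix of $c_\alpha = \lim_k L_k(w)$; letting $n \to \infty$ yields $w = c_\alpha$.
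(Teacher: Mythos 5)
Your route is genuinely different from the paper's: the paper proves the theorem by desubstitution, showing that the morphisms $E$ and $G$ preserve Sturmianity, that $c_\alpha$ factorises through $\theta_m=G^{m-1}\circ E\circ G$, and that $h_n(0)=s_n$, whereas you work directly with the standard-word recursion and the left-special characterisation of $c_\alpha$. Your ``twin product lemma'' is true (it is essentially Proposition 3 of the paper: for a standard pair $(u,v)$, the word $(uv)^{--}=(vu)^{--}$ is a palindrome), the arithmetic step is correct, and the concluding step --- $P_n$ right special, hence left special by reversal-stability of the factor set, hence $s_n=L_{q_n}(w)$ and $w=c_\alpha$ --- is clean and valid \emph{once $w$ is known to be Sturmian}.

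The genuine gap is the balance step. You claim that a palindrome $v$ with $0v0$ and $1v1$ both occurring in $s_{n+1}s_n$ would ``descend'' to such a configuration in $s_ns_{n-1}$; but an offending $v$ can have length comparable to $|s_{n+1}|$, far exceeding $|s_ns_{n-1}|$, so there is nothing for it to descend to inside the much shorter word $s_ns_{n-1}$, and the palindromic structure of $P_{n+1}$ alone does not produce such a reduction. To push a forbidden configuration down one level of the hierarchy you must desubstitute, i.e.\ invert the morphisms $\theta_{a_n}$ --- which is precisely the mechanism the paper's proof supplies and which your outline deliberately avoids. As written, the Sturmianity of $w$ is not established, and your final step (which uses uniqueness of left special factors, reversal-stability, and $c_\alpha=\lim L_n$, all of which presuppose $w$ Sturmian) cannot yet be invoked. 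Separately, you defer the palindromicity of $P_{n+1}=s_n^{a_{n+1}}P_n$ as ``where the real work lies'' without doing that work; this part is repairable by a standard induction that carries along the palindromicity of $s_n^{--}$ and the alternation of the last two letters of $s_n$ (the paper's Proposition 3), but in its current state it is an announcement rather than a proof.
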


\begin{proof}
Define the two morphisms
\begin{center}
	$E$ : $\begin{matrix}
		 0 & \longmapsto & 1  \\
		 1 & \longmapsto & 0  \\
		\end{matrix}$ \quad and \quad $G$ : $\begin{matrix}
		 0 & \longmapsto & 0  \\
		 1 & \longmapsto & 01  \\
		\end{matrix}$,
\end{center}
they are injective, in the sense that if $x$ and $y$ are two infinite words such that $G(x)=G(y)$, then $x=y$, and the same for $E$. We obviously have that $x$ is sturmian if and only if $E(x)$ is sturmian. Let's show now that $x$ is Sturmian if and only if $G(x)$ is Sturmian.

 Suppose that $G(x)$ is unbalanced : there exists a palindrome $w$ such that both $0w0$ and $1w1$ are factors of $G(x)$. In view of $G$ there must exist a word $z$ such that $w=0z0$, moreover $01w1=010z01$ is a factor of $G(x)$. There must be a word $y$ such that $0z=G(y)$ and $01w1=G(1y1)$ so that $1y1$ is a factor of $x$ by injectivity of $G$. On the other hand $0w0=00z00=G(0y0)$ is a factor of $G(x)$ and so $1y1$ is a factor of $x$. Both $0y0$ and $1y1$ are factors of $x$ so $x$ is unbalanced. This shows that if $x$ is Sturmian then $G(x)$ is balanced, and it is not hard to see that its slope is irrational, so $G(x)$ is Sturmian.

Conversely, if $G(x)$ is Sturmian, then $x$ is Sturmian. Indeed, suppose that $x$ is unbalanced, namely let $w$ be a palindrome such that both $0w0$ and $1w1$ are factors of $x$. Then both $0G(w)0$ and $01G(w)01$ are factors of $G(x)$. In view of $G$, $0G(w)00$ is a prefix of $G(0w0a)$ for any letter $a$, so that both $0G(w)00$ and $1G(w)01$ are factors of $G(x)$, showing at once that $G(x)$ is unbalanced. It is clear from the slopes that if $G(x)$ is not ultimately periodic, then $x$ is also not ultimately periodic.

Let $m$ be the greatest $m\geq 1$ such that $0^m1$ is a factor of $c_\alpha$. Suppose $m\geq 2$, by the balanced property, the words $10^{k}1$ for $k=0\ldots m-2$ cannot be factors of $c_\alpha$, and we see that $10^{m-1}1$ must be a factor of $c_\alpha$ for otherwise $c_\alpha$ would be ultimately periodic. If $m=1$, then we easily see that $11$ must be a factor of $c_\alpha$. So the word $0^{m-1}1$ is left special and hence a prefix of $c_\alpha$. All this sums up to the fact that $c_\alpha$ can be factorised in an infinite concatenation of the two words $0^{m-1}1$ and $0^{m-1}10$ for some $m\geq 1$.

We define the morphisms, for $m,n\geq 1$ :
\begin{center}
	$\theta_m=G^{m-1}\circ E \circ G$ \quad and \quad $h_n=\theta_{a_1}\circ \theta_{a_2}\circ\cdots\circ\theta_{a_n}$.
\end{center}
Since $\theta_m(0)=0^{m-1}1$ and $\theta_m(1)=0^{m-1}10$, we have seen that $c_\alpha$ factorises as $c_\alpha=\theta_m(x)$ for some $x$, that must be Sturmian.
For $m\geq 1$, we have $\theta_m(0c_\alpha)=0^{m-1}1\theta_m(c_\alpha)$ and $\theta_m(1c_\alpha)=0^{m-1}10\theta_m(c_\alpha)$ so that $\theta_m(c_\alpha)$ is caracteristic and, according to the slopes, we have
\begin{center}
	$\theta_m(c_\alpha)=\displaystyle c_{\frac{1}{m+\alpha}}$.
\end{center}
so that for all $n\geq 1$ we have :
\begin{center}
	$\displaystyle h_n(c_{[0;a_{n+1},a_{n+2},\ldots]})=c_{\alpha}$.
\end{center}
Moreover, we have $h_n(0)=s_n$ and $h_n(1)=s_ns_{n-1}$ as it is easily checked by induction on $n\geq 1$. This shows that $s_n$ is a prefix of $c_{\alpha}$ for all $n\geq 1$, proving the theorem.
\end{proof}

\subsection{Standard and central words}

\begin{defi}
The subset of $(A^*)^2$ of standard pairs is recursively defined by the rules :
\begin{itemize}
	\item $(0,1)$ is a standard pair,
	\item if $(u,v)$ is a standard pair, then $(vu,v)$ and $(u,uv)$ are standard pairs.
\end{itemize}
\end{defi}

We recall the notation $x^{-}$ for a word $x$ deprived of its last letter. If $x$ is empty, then we set $x^{-}$ to be the empty word.

\begin{prop}
Let $(u,v)$ be a standard pair. 
\begin{enumerate}[1)]
  \item $(uv)^{--}=(vu)^{--}$,
	\item if $|u|\geq 2$, $u$ ends with $10$. If $|v|\geq 2$, $u$ ends with $01$
	\item $u^{--}$ and $v^{--}$ are palindromes.
	\item We have $|u||v|_1-|u|_1|v|=1$.
\end{enumerate}
\end{prop}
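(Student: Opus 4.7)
The plan is to establish all four properties simultaneously by structural induction on the construction of standard pairs, carrying along the auxiliary observation that in every standard pair $u$ ends in $0$ and $v$ ends in $1$. The base case $(u,v)=(0,1)$ is verified directly: $u^{--}=v^{--}=\varepsilon$ are trivial palindromes, $(uv)^{--}=(vu)^{--}=\varepsilon$, (2) is vacuous, and $|u||v|_1-|u|_1|v|=1\cdot 1-0\cdot 1=1$.

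In the inductive step I treat the generation rule $(u,v)\mapsto(vu,v)$ in detail; the rule $(u,v)\mapsto(u,uv)$ is completely analogous. The auxiliary observation and property (2) for $(vu,v)$ reduce to short case splits on whether $|u|=1$ or $|u|\geq 2$. Property (4) is a direct computation: since $|vu|=|u|+|v|$ and $|vu|_1=|u|_1+|v|_1$,
$$|vu||v|_1-|vu|_1|v|=|u||v|_1-|u|_1|v|=1.$$
Property (1) for the new pair follows at once from (1) of the old one: letting $M=(uv)^{--}=(vu)^{--}$, properties (2) and the auxiliary give $uv=M\cdot 01$ and $vu=M\cdot 10$, hence $vuv=vM\cdot 01$ and $vvu=vM\cdot 10$, so $(vuv)^{--}=vM=(vvu)^{--}$.

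The heart of the argument is property (3), the palindromicity of the new central word $(vu)^{--}$. Assuming $|u|,|v|\geq 2$, (2) lets me write $u=u^{--}\cdot 10$ and $v=v^{--}\cdot 01$, so that
$$(vu)^{--}=v^{--}\cdot 01\cdot u^{--}.$$
Reversing and using that $u^{--}$ and $v^{--}$ are palindromes by IH (3) yields $u^{--}\cdot 10\cdot v^{--}$; the equality of these two expressions is precisely property (1) of the old pair expanded out via (2), so $(vu)^{--}$ is its own reverse. The edge cases $|u|=1$ or $|v|=1$ force the pair to be of the form $(0,0^k 1)$ or $(1^k 0,1)$ by a trivial induction on the derivation, and are verified by direct inspection. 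The main obstacle is noticing the interlocking between (1) and (3): the palindromicity of the new central word hinges on the old equality (1), while (1) in turn propagates via the explicit factorisations of $u$ and $v$ given by (2). Once this coupling is spotted, the induction runs smoothly.
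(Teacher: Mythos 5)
Your simultaneous structural induction is correct and is exactly the ``straightforward induction'' the paper invokes without detail; the coupling you identify between (1), (2) and (3) (namely $uv=(uv)^{--}01$, $vu=(vu)^{--}10$, so that the old identity $(uv)^{--}=(vu)^{--}$ yields $v^{--}01u^{--}=u^{--}10v^{--}$ and hence palindromicity) is the standard way to make it run, and your edge cases $(0,0^k1)$ and $(1^k0,1)$ are handled properly. Note only that item 2) of the statement contains a typo --- the second clause should read ``$v$ ends with $01$'' --- which your proof silently and correctly assumes.
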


The proofs of proposition $3$ are straightforward inductions.

\begin{defi}
A word is said to be standard if it is a coponent of a central pair.
\end{defi}

\begin{prop}\begin{enumerate}
	\item If $u$ is standard, then $u^{--}$ is palindromic.
	\item A standard word is primitive (that is, not a non-trivial power of a word).
	\item The words $(s_n)$ in theorem 4 are standard. The suffix of length $2$ of $s_n$ is $t_n$, where $t_n=10$ if $n$ is even, and $t_n=01$ if $n$ is odd, for $n\geq 2$.
\end{enumerate}
\end{prop}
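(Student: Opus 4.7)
Part $1)$ requires nothing beyond invoking Proposition $3$ item $3)$: by definition a standard word is a component of some standard pair $(u,v)$, and that item already guarantees that both $u^{--}$ and $v^{--}$ are palindromes. For part $2)$, I plan to use the unimodular identity $|u||v|_1-|u|_1|v|=1$ from Proposition $3$ item $4)$, which immediately forces $\gcd(|u|,|u|_1)=1$ (and symmetrically for $v$). If $u$ were a proper power $w^k$ with $k\geq 2$, then $k$ would divide both $|u|=k|w|$ and $|u|_1=k|w|_1$, contradicting this coprimality.

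For part $3)$ I argue by induction on $n$, proving the slightly stronger statement that $(s_n,s_{n+1})$ is a standard pair when $n$ is even and $(s_{n+1},s_n)$ is a standard pair when $n$ is odd. The base pair $(s_0,s_1)$ is obtained from the seed $(s_0,s_{-1})=(0,1)$ by applying the rule $(u,v)\mapsto(u,uv)$ exactly $a_1-1$ times, so that the second coordinate evolves from $s_{-1}=1$ into $0^{a_1-1}1=s_1$. In the inductive step the two generating rules alternate: from $(s_{n-1},s_n)$ with $n$ odd, $a_{n+1}$ iterations of $(u,v)\mapsto(vu,v)$ produce $(s_n^{a_{n+1}}s_{n-1},s_n)=(s_{n+1},s_n)$, while from $(s_n,s_{n-1})$ with $n$ even, $a_{n+1}$ iterations of $(u,v)\mapsto(u,uv)$ produce $(s_n,s_n^{a_{n+1}}s_{n-1})=(s_n,s_{n+1})$. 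Either way the recursion $s_{n+1}=s_n^{a_{n+1}}s_{n-1}$ of Theorem $4$ is reproduced and $s_{n+1}$ is certified standard.

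The two-letter suffix assertion is then automatic from Proposition $3$ item $2)$: in any standard pair $(u,v)$ with $|u|,|v|\geq 2$, the first coordinate ends in $10$ and the second in $01$. By the parity alternation above, $s_n$ sits in the first slot exactly when $n$ is even and in the second slot exactly when $n$ is odd, so for $n\geq 2$ (which already forces $|s_n|\geq 2$) its length-$2$ suffix is $10$ or $01$ respectively, matching the definition of $t_n$. The only mildly delicate aspect of the whole plan is keeping the parity of $n$ straight through the induction so that the correct generating rule is applied at each stage; everything else is a direct transcription of the recursion.
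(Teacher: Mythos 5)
Your proposal is correct and follows essentially the same route as the paper, which simply cites Proposition 3 for parts 1) and 2) and observes that $(s_{2n},s_{2n-1})$ and $(s_{2n},s_{2n+1})$ are standard pairs; your contribution is merely to spell out the coprimality argument for primitivity and the parity-alternating induction that the paper leaves implicit. No gaps.
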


\begin{proof}
1) The fact that $u^{--}$ is palindromic is trivial from proposition $3$.

2)The word $u$ is primitive since by proposition $3-4)$, $|u|$ and $|u|_1$ are coprime.

3) We see by the definition of the sequence $(s_n)$ that $(s_{2n},s_{2n-1})$ and $(s_{2n},s_{2n+1})$ are standard pairs for all $n\geq 0$. The remaining part of the assertion is clear by proposition $3$.
\end{proof}

\begin{defi}
We define the set of central words by one of the following equivalent definitions :
\begin{enumerate}[(i)]
	\item a word $w$ is central if and only if there exists a standard word $u$ such that $w=u^{--}$,
	\item the set of central words is inductively defined as follows :
	\begin{itemize}
		\item powers of a letter are central words
		\item if $p$ and $q$ are central, and $p01q$ is a palindrome, then $p01q$ is central
		\end{itemize}
		\item a word $w$ is central if and only if it is a power of a letter, or a palindrome of the form $p01q$ with $p$, $q$ palindromes,
		\item a word is central if and only if it is a prefix palindrome of a caracteristic word.
\end{enumerate}
The decomposition $w=p01q$ with $p$, $q$ palindrome of a central word that is not a power of a letter is then unique.
\end{defi}

\begin{proof}
\underline{$(ii)$-central $\Leftrightarrow$ $(iii)$-central :} It is clear from the definition $(ii)$ that $(ii)$-central words are palindromes, so that $(ii)$-central $\Rightarrow$ $(iii)$-central. For the converse, it is sufficient to show that if $w=p01q$ is a palindrome with $p$ and $q$ palindrome is $(iii)$-central, then $p$ and $q$ are $(iii)$-central. We cannot have $|p|=|q|$ since $w$ is a palindrome, and we can assume that $|p|\leq|q|-1$. If $|p|=|q|-1$ then $q=p0=0p$ and $p$, $q$ are powers of letters. If $|p|=|q|-2$ then $q=p01u$ and since $w=p01\tilde{u}10p$ is a palindrome, $u$ is a palindrome and $q$ is $(iii)$-central. By continuing this argument with $q$ in the place of $w$, we see that there exists a unique $N\geq 1$ such that $q=(p01)^Nt$ with $|t|\leq |p|-1$, so that $p01t$ is $(iii)$-central with $|t|\leq |p|-1$, and in this situation we have seen that $p$ is $(iii)$-central.

\underline{$(ii)$-central $\Rightarrow$ $(i)$-central :} The case of powers of letters being obvious, we show by induction on $|w|$ that if $w=p01q$ with $p$, $q$ and $w$ $(ii)$-central, then $(q10,p01)$ is a standard pair. We can assume $|p|\leq |q|$ without loss of generality. If $|p|=|q|-1$, then $p0=q=0p=0^{|q|}$ and $(0^{|q|}10,0^{|q|}1)$ is a standard pair. If $|p|\leq|q|-2$, then $q=p01u$ for some palindrome $u$. Since $q$ is $(ii)$-central, it is $(iii)$-central and from the preceeding proof we know that $u$ is $(iii)$-central, and so $u$ is $(ii)$-central. By the induction hypothesis, $(u10,p01)$ is a standard pair, and so is $(p01u10,p01)=(q10,p01)$. Since $(q10,p01)$ is a standard pair, $(p01q10,p01)$ is also and $w=(p01q10)^{--}$ is $(i)$-central.

\underline{$(i)$-central $\Rightarrow$ $(iii)$-central :} Let $w=u^{--}$ with $(u,v)$ a standard pair. Write $u=w01=yx$ for a standard pair $(x,y)$, $x=q10$ and $y=p01$, with $p$ and $q$ palindromes. Then $w=p01q$ and $w$ is $(iii)$-central.

\underline{$(i)$-central $\Rightarrow$ $(iv)$-central :} We know that a $(i)$-central word is a palindrome. Let $\Gamma : (u,v)\in (A^*)^2 \mapsto (u,uv)\in (A^*)^2$ and $\Delta : (u,v)\in (A^*)^2 \mapsto (vu,v)\in (A^*)^2$. Let $w=u^{--}$ with $(u,v)=\Gamma^{a_k}\circ\Delta^{a_{k-1}}\circ\cdots\circ\Delta^{a_2}\circ\Gamma^{a_{1}-1}(0,1)$, with $a_i\in\mathbb{N}^*$ for $i=1\ldots k$. Then $w=s_k^{--}$ for any caracteristic word having slope whose partial quotients begins with $a_1,a_2,\ldots,a_k$, with the sequence $(s_n)$ defined as in theorem 4, so that $w$ is $(iv)$-central. The case where $w=v^{--}$ with $(u,v)$ standard is similar.

\underline{$(iv)$-central $\Rightarrow$ $(iii)$-central :}
In view of the preceeding proof, any $(iv)$-central word is a prefix of a $(i)$-central word, and so a prefix of a $(iii)$-central word. Let $w$ be a palindrome prefix of a palindrome $p01q$ with $p$ and $q$ palindromes. We may assume $|p|\leq |q|$ and by induction on $|p01q|$ we may assume $|w|>|q|$ since otherwise $w$ is a prefix of the $(iii)$-central word $q$. If $|w|=|q|+1$ then $w=q1=1q$ and $w$ is a power of a letter. If $|w|\geq|q|+2$ then write $w=q10t$, and since $w$ is a prefix of $q10p$, $t$ is a prefix of $p$, and so is also a prefix of $q$. The word $t$ is a suffix and a prefix of the palindromic word $w$, and so is palindromic, and $w=q10t$ with $q$ and $t$ palindromes, as required.

\underline{Unicity of decomposition :} Let $w=p01q=s01t=u^{--}$ be a central word, with $p$, $q$, $s$ and $t$ palindrome and $u10$ standard, and assume that $|s|>|p|$. We cannot have $|s|=|p|+1$, so write $s=p01\lambda$ and see that $q=\lambda 01t$ so that $u=w10=q10p10=\lambda 01t10p10=t10s10=t10p01\lambda10$, and the two words $\lambda 01$ and $t10p10$ commute. The primitive word $u$ is a product of two non-empty commuting words, hence a contradiction.

\end{proof}

\section{Repetition function and Rauzy graphs of Sturmian words}

We recall the following notations. The dynamical map $T$ is the shift, which removes the first letter of an infinite word. For any word $x$ and integer $n\geq 1$, we note $\mathbb{P}_n(x)$ the prefix of length $n$ of $x$.

\subsection{Definitions}

In \cite{bugeaud} a new complexity function is introduced, also called the repetition function. We define here a similar function and still call it the repetition function, since the two are linked by a simple formula. Namely, if $r_0(x,n)$ is Bugeaud and Kim's repetition function, then we have $r_0(x,n)=n+r(x,n)$.

\begin{defi}[Repetition function]
Let $x$ be an infinite word over a finite alphabet $A$. We define, for an integer $m>0$ :
\begin{center}
	$r(x,m)=\max\{k\in\mathbb{N} \ | \ \mathbb{P}_m(x), \mathbb{P}_m(T(x)), \ldots, \mathbb{P}_m(T^{k-1}(x)) \text{ are all distincts }\}$.
\end{center}
The function $r(x,\cdot)$ is called the repetition function of $x$.
\end{defi}

\begin{prop}
Let $x$ be an infinite word.
\begin{itemize}
	\item $\forall m >0$, $r(x,m)\leq p(x,m)$.
	\item if $x$ is Sturmian we have $\forall m >0$, $r(x,m)\leq m+1$
\end{itemize}
\end{prop}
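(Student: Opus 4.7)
The plan is essentially to unravel the two definitions. For the first bullet, I would note that for any $i\geq 0$ the prefix $\mathbb{P}_m(T^i(x))$ is exactly the factor $x_{i+1}x_{i+2}\cdots x_{i+m}$ of $x$ of length $m$. Hence if $\mathbb{P}_m(x),\mathbb{P}_m(T(x)),\ldots,\mathbb{P}_m(T^{k-1}(x))$ are pairwise distinct, they constitute $k$ pairwise distinct factors of $x$ of length $m$, so $k\leq p(x,m)$. Taking the maximum over such $k$ gives $r(x,m)\leq p(x,m)$.

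The second bullet is then an immediate corollary: if $x$ is Sturmian then by definition $p(x,m)=m+1$ for every $m\geq 1$, and substituting into the inequality from the first bullet yields $r(x,m)\leq m+1$.

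There is no real obstacle here: the whole statement is a bookkeeping observation about the two definitions. The only point to mention is that the map $i\mapsto \mathbb{P}_m(T^i(x))$ enumerates, with possible repetitions, exactly the set of factors of length $m$ of $x$, so the number of distinct values it takes on any initial segment of $\mathbb{N}$ is bounded by $p(x,m)$.
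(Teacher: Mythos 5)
Your proof is correct; the paper actually gives no proof of this proposition at all, treating it as immediate from the definitions, and your argument is exactly the straightforward unwinding (each $\mathbb{P}_m(T^i(x))$ is a factor of length $m$, so $k$ pairwise distinct ones force $k\leq p(x,m)$, and $p(x,m)=m+1$ in the Sturmian case) that justifies the omission. Nothing further is needed.
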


\begin{defi}[Rauzy graph]
Let $x$ be an infinite word over an alphabet $A$. For every integer $m>0$, we define the factor graph, or Rauzy graph, of $x$ of degree $m$ as the directed graph having :
\begin{itemize}
	\item vertexes as the factors of $x$ of length $m$
	\item an arrow $s\rightarrow t$ if and only if there exists a factor $r$ of $x$ of length $m+1$ such that $s$ is a prefix of $r$ and $t$ a suffix of $r$.
\end{itemize}
\end{defi}

Given a path $s_1\rightarrow s_2 \rightarrow \ldots \rightarrow s_k$ in this graph, we set $k-1$ to be its length. The path defined by $x$ in $G_m$ is the infinite path
\begin{center}
	$\mathbb{P}_m(x)\rightarrow \mathbb{P}_m(T(x))\rightarrow \ldots \rightarrow \mathbb{P}_m(T^k(x))\rightarrow \ldots$.
\end{center}

For a Sturmian word $x$ and $m>0$, $G_m$ has $m+1$ vertexes. The vertex $L_m$ has in-degree $2$ and the vertex $R_m$ has out-degree $2$ (notice that they may be equal). Every vertex that is neither $L_m$ nor $R_m$ has in-degree $1$ and out-degree $1$. Therefore, $G_m$ is the fusion of two cycles, sharing a common path. The following proposition explains how to read the repetition function on the factor graph of $x$. A Hamiltonian path in a directed graph is a path that does not visit a vertex more that twice.
\begin{prop}
Let $x$ be a Sturmian word, $m>0$ and $G_m$ its Rauzy graph of degree $m$. Then $r(x,m)$ is the length of the longest Hamiltonian finite path
\begin{center}
	$\mathbb{P}_m(x)\rightarrow \mathbb{P}_m(T(x))\rightarrow \ldots \rightarrow \mathbb{P}_m(T^{k-1}(x))$.
\end{center}
in the infinite path defined by $x$.
\end{prop}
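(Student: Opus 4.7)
The plan is to prove this proposition as a direct translation of the definition of the repetition function $r(x,m)$ into the graph-theoretic language of the Rauzy graph $G_m$.

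First I would verify that the sequence $\mathbb{P}_m(x), \mathbb{P}_m(T(x)), \mathbb{P}_m(T^2(x)), \ldots$ genuinely describes a valid infinite path in $G_m$. Two consecutive terms $\mathbb{P}_m(T^i(x))$ and $\mathbb{P}_m(T^{i+1}(x))$ overlap in $m-1$ letters (the last $m-1$ of the first equal the first $m-1$ of the second), and together form the length-$(m+1)$ factor $\mathbb{P}_{m+1}(T^i(x))$ of $x$. This factor is precisely the witness $r$ required by the definition of an edge in $G_m$, so consecutive terms of the sequence really are joined by an arrow.

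Next, I would match the Hamiltonian condition of the proposition against the definition of $r$. By definition, $r(x,m)$ is the largest integer $k$ for which the prefixes $\mathbb{P}_m(T^0(x)), \ldots, \mathbb{P}_m(T^{k-1}(x))$ are pairwise distinct. Read inside $G_m$, this says exactly that the initial segment $\mathbb{P}_m(x)\to\cdots\to\mathbb{P}_m(T^{k-1}(x))$ of the $x$-path visits pairwise distinct vertices, i.e.\ is Hamiltonian in the sense given just before the proposition. Hence the longest Hamiltonian prefix of the $x$-path is exactly the one indicated in the statement with $k=r(x,m)$.

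The maximality step is automatic: if the prefix at step $k=r(x,m)$ could be extended by one more new vertex $\mathbb{P}_m(T^{r(x,m)}(x))$ not occurring earlier, then the $r(x,m)+1$ prefixes $\mathbb{P}_m(T^0(x)),\ldots,\mathbb{P}_m(T^{r(x,m)}(x))$ would be pairwise distinct, contradicting the maximality in the definition of $r(x,m)$. Conversely any Hamiltonian finite prefix of the $x$-path of length $k-1$ visits $k$ distinct vertices and therefore gives $k\leq r(x,m)$. The only thing to keep track of is the conventional bookkeeping between the number of vertices and the number of edges of the path, which matches the way $k$ is displayed in the statement; I do not expect any serious obstacle beyond that, as the proposition is essentially a faithful transcription of the definition of $r$ into the language of $G_m$.
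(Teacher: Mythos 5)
Your argument is correct and is exactly the definitional unwinding the paper intends: the paper in fact states this proposition without any proof, and your two observations (consecutive prefixes $\mathbb{P}_m(T^i(x)),\mathbb{P}_m(T^{i+1}(x))$ are witnessed by the length-$(m+1)$ factor $\mathbb{P}_{m+1}(T^i(x))$, and ``pairwise distinct prefixes'' is literally the Hamiltonian condition) are all that is needed. The one point you wave at but should settle explicitly is the length convention: with the paper's own convention that a path $s_1\rightarrow\cdots\rightarrow s_k$ has length $k-1$, the displayed path with $k=r(x,m)$ vertices has length $r(x,m)-1$, so the statement as written is off by one --- this is a defect of the statement rather than of your proof (the paper later uses the proposition with $r(x,m)$ counting \emph{vertices} of the longest Hamiltonian initial segment, e.g.\ in deducing $r(x,m)=m+1$ from the path meeting all $m+1$ vertices of $G_m$), but a complete write-up should say which convention is being used instead of leaving it as ``bookkeeping''.
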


\subsection{Repetition function of caracteristic words}

\begin{theo}
Let $x$ be a Sturmian word and $m\geq 2$. The following statements are equivalents :
\begin{enumerate}[i)]
	\item $r(x,m)=m+1$
	\item $r(x,m)\neq r(x,m-1)$
\end{enumerate}
\end{theo}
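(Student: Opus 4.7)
The forward direction $r(x,m) = m+1 \Rightarrow r(x,m)\neq r(x,m-1)$ is immediate from the Sturmian bound $r(x,n)\leq n+1$: one simply observes that $r(x,m-1)\leq m<m+1=r(x,m)$.

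For the converse, the plan is to translate the hypothesis $r(x,m-1)<r(x,m)$ into a geometric condition on the $G_m$-walk and then exploit the two-cycle structure of $G_m$ to force $r(x,m)=m+1$. The first step is to consider the projection $\pi\colon V(G_m)\to V(G_{m-1})$ sending each $m$-factor to its $(m-1)$-prefix; since $\mathbb{P}_{m-1}(T^i(x))=\pi(\mathbb{P}_m(T^i(x)))$, the $G_{m-1}$-walk is the image of the $G_m$-walk under $\pi$. As $R_{m-1}$ is the unique right-special factor of length $m-1$, the map $\pi$ is injective except on the single fiber $\pi^{-1}(R_{m-1})=\{R_{m-1}0,R_{m-1}1\}$. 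This immediately gives the key equivalence
\[
r(x,m-1)<r(x,m) \iff \text{both } R_{m-1}0 \text{ and } R_{m-1}1 \text{ lie among the first } r(x,m) \text{ positions of the } G_m\text{-walk.}
\]
The inequality $r(x,m-1)\leq r(x,m)$ in general comes from the fact that distinct $(m-1)$-prefixes force distinct $m$-prefixes.

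Next I would record the identity $R_m=aR_{m-1}$ for some letter $a$, which holds because the $(m-1)$-suffix of the right-special word $R_m$ is itself right-special and hence equal to $R_{m-1}$. It follows that the two outgoing arrows of $R_m$ in $G_m$ point precisely to $R_{m-1}0$ and $R_{m-1}1$; in the fusion-of-two-cycles picture of $G_m$ these are exactly the two first vertices of the two branches running from $R_m$ back to $L_m$. Now assume both $R_{m-1}0$ and $R_{m-1}1$ appear among the first $k:=r(x,m)$ pairwise distinct positions. Then $R_m$ itself is visited at most once and only one of its outgoing edges is used, so the other element of $\{R_{m-1}0,R_{m-1}1\}$ must coincide with the starting vertex $\mathbb{P}_m(x)$. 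Tracing the walk from this vertex along its branch to $L_m$, up the stem to $R_m$, and then across the opposite branch, the first repetition occurs only after every vertex of $G_m$ has been visited, yielding $r(x,m)=m+1$.

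The most delicate point is the degenerate case in which one of $R_{m-1}0,R_{m-1}1$ coincides with $L_m$, so that the associated branch collapses to a single edge $R_m\to L_m$ and $L_m$ acquires a second predecessor (the last vertex of the non-trivial branch). There the walk could a priori reach $L_m$ without passing through $R_m$ as its immediate predecessor, and I would need a parallel sub-analysis distinguishing the cases $\mathbb{P}_m(x)=L_m$, $\mathbb{P}_m(x)=R_{m-1}1$, and $\mathbb{P}_m(x)$ lying elsewhere in the graph. In each sub-case a direct traversal of the cycle structure shows that the hypothesis of visiting both successors of $R_m$ still forces the walk to cover every vertex of $G_m$ before the first repeat, so the conclusion $r(x,m)=m+1$ persists.
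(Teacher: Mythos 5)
Your proposal is correct and follows essentially the same route as the paper: translate the hypothesis $r(x,m-1)<r(x,m)$ into the statement that both out-neighbours $R_{m-1}0$ and $R_{m-1}1$ of $R_m$ occur among the first $r(x,m)$ vertices of the walk in $G_m$, and then use the two-cycles-glued-along-a-stem structure to conclude the walk is Hamiltonian. If anything, you are more careful than the paper at the final covering step (the paper simply asserts that the path "must pass by all the $m+1$ vertexes"), and your flagged degenerate case $L_m\in\{R_{m-1}0,R_{m-1}1\}$ does work out exactly as you sketch.
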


\begin{proof}
The implication $(i)\Rightarrow(i)$ is clear since $r(x,m-1)\leq m$. For the converse, let $A_m$ and $B_m$ be the two distinct vertexes of $G_m$ such that
\begin{center}
	$R_m\rightarrow A_m$ \quad and \quad $R_m\rightarrow B_m$.
\end{center}

Consider the path
\begin{center}
	$\mathbb{P}_{m-1}(x)\rightarrow \mathbb{P}_{m-1}(T(x))\rightarrow \ldots \rightarrow \mathbb{P}_{m-1}(T^{r(x,m-1)}(x))$.
\end{center}
in $G_{m-1}$. There exists a unique integer $0\leq j < r(x,m-1)$ such that $\mathbb{P}_{m-1}(T^{r(x,m-1)}(x))=\mathbb{P}_{m-1}(T^j(x))$. In $G_m$, we cannot have $\mathbb{P}_{m}(T^{r(x,m-1)}(x))=\mathbb{P}_{m}(T^j(x))$ because this would imply $r(x,m)= r(x,m-1)$, which by assumption is not the case. We then have $\mathbb{P}_{m}(T^{r(x,m-1)}(x))\neq\mathbb{P}_{m}(T^j(x))$ and these two words differ only by their last letters. This shows that
\begin{center}
	$\{A_m,B_m\}=\{\mathbb{P}_{m}(T^{r(x,m-1)}(x)),\mathbb{P}_{m}(T^j(x))\}$
\end{center}
so that the path
\begin{center}
	$\mathbb{P}_{m}(x)\rightarrow \mathbb{P}_{m}(T(x))\rightarrow \ldots \rightarrow \mathbb{P}_{m}(T^{r(x,m)-1}(x))$.
\end{center}
passes on the two vertexes $A_m$ and $B_m$. This path is the longest Hamiltonian path that starts at $\mathbb{P}_{m}(x)$ in the path defined by $x$, so we can see that it must pass by all the $m+1$ vertexes of $G_m$. This shows that $r(x,m)=m+1$.
\end{proof}

\begin{lem}
Let $c_\alpha$ be a caracteristic Sturmian word. Then we have
\begin{center}
	$\mathbb{P}_{m}(T^{r(c_\alpha,m)}(c_\alpha))=\mathbb{P}_m(c_\alpha)=L_m$
\end{center}
for all $m>0$.
\end{lem}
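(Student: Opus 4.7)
The plan is to split the stated equality into its two halves. The right-hand equality $\mathbb{P}_m(c_\alpha) = L_m$ is immediate from the definition of $c_\alpha$ as $\lim L_n$, since by definition every $L_m$ is a prefix of $c_\alpha$. All the content of the lemma therefore lies in the first equality $\mathbb{P}_m(T^{r(c_\alpha,m)}(c_\alpha)) = L_m$. I will prove this by a short combinatorial argument on the Rauzy graph $G_m$, relying only on the structural description of $G_m$ recalled just before Proposition on Hamiltonian paths: every vertex has in-degree $1$, except $L_m$, which has in-degree $2$.

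Set $r = r(c_\alpha,m)$ and $v_i = \mathbb{P}_m(T^i(c_\alpha))$. By the definition of $r$, the vertices $v_0, v_1, \ldots, v_{r-1}$ are pairwise distinct in $G_m$, while $v_0, \ldots, v_r$ are not, so there is a (unique) index $0 \leq j \leq r-1$ with $v_r = v_j$. The goal is to check that this vertex is $L_m$.

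The key step is a one-line backward argument. The two edges $v_{j-1} \to v_j$ and $v_{r-1} \to v_r$ of $G_m$ end at the same vertex $v_j = v_r$. If this vertex had in-degree $1$, its predecessor would be unique, forcing $v_{j-1} = v_{r-1}$; since $j-1 < r-1$, this contradicts the distinctness of $v_0, \ldots, v_{r-1}$. Hence either $j = 0$, in which case $v_r = v_0 = \mathbb{P}_m(c_\alpha) = L_m$, or $v_r$ has in-degree at least $2$, which by the structural description forces $v_r = L_m$ anyway.

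I do not expect any real obstacle: once the in-degree description of $G_m$ is available, the lemma amounts to the remark that the first repetition in a walk on $G_m$ can only occur at the unique branching-in vertex. The only points needing a moment of care are the degenerate case $r = 1$ (handled by the first branch with $j = 0$) and the fact that $r$ is well-defined, which follows from $r \leq p(c_\alpha,m) = m+1$.
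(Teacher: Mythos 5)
Your proof is correct and follows essentially the same route as the paper: both identify the unique index $j$ with $v_r=v_j$ and observe that, were $j\neq 0$, the two distinct predecessors $v_{j-1}$ and $v_{r-1}$ would force $v_j$ to be the unique in-degree-$2$ vertex $L_m$ (the paper phrases this as $v_j$ being left special, which is the same fact). Your variant is marginally cleaner in that it concludes $v_r=L_m$ in both branches instead of deriving a contradiction, but the underlying idea is identical.
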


\begin{proof}
The second equality comes from the definition of $c_\alpha$. Let $0\leq j < r(c_\alpha,m)$ be the only integer such that $\mathbb{P}_{m}(T^{r(c_\alpha,m)}(c_\alpha))=\mathbb{P}_{m}(T^j(c_\alpha))$ and assume $j\neq 0$. Then $\mathbb{P}_{m}(T^{r(c_\alpha,m)-1}(x))\neq\mathbb{P}_{m}(T^{j-1}(c_\alpha))$ and these two words differ only by their first letters. This shows that $\mathbb{P}_{m}(T^j(c_\alpha))$ is left special, so that $j=0$ and this is a contradiction.
\end{proof}

We define $r(z,m)$ for a finite word $z$ and $m>0$, provided $z$ admits a factor of length $m$ that occurs at least twice, as $r(x,m)$ for any infinite word $x$ such that $z$ is a prefix of $x$.

\begin{lem}
Let $z=p01q$ be a central word with $|p|\leq |q|$. Then
\begin{center}
	$r(z,|p|+1)=|p|+2$.
\end{center}
\end{lem}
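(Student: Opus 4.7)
The plan is to establish the two inequalities $r(z,m)\leq m+1$ and $r(z,m)\geq m+1$ where $m=|p|+1$. Both rest on the palindromic identity $z=p01q=q10p$, coming from the fact that $z$, $p$ and $q$ are palindromes, together with the preceding lemma applied to a characteristic Sturmian word $c_\alpha$ admitting $z$ as a prefix (such $c_\alpha$ exists by the equivalence of the definitions of central words).

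For the upper bound, the identity $z=q10p$ shows that the first $|q|$ letters of $z$ spell out $q$; since $|q|\geq|p|+1$, comparison with $z=p01q$ forces $q[0..|p|]=p0=\mathbb{P}_m(z)$. The shift $T^{|p|+2}(z)$ equals $q$, so its length-$m$ prefix is $p0=L_m$, giving a repetition of the length-$m$ prefix at shift $|p|+2$. In particular $z$ admits a length-$m$ factor occurring at least twice, so $r(z,m)$ is well-defined, and $r(z,m)\leq |p|+2$.

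For the lower bound, the preceding lemma applied to $c_\alpha$ says that the first repetition of a length-$m$ prefix along the shifts of $c_\alpha$ is $L_m=p0$ itself. It therefore suffices to check that $p0$ does not occur as a length-$m$ factor of $z$ starting at any position $k\in\{1,\ldots,|p|+1\}$. Writing $f_k=z[k..k+|p|]$, I focus on two positions of $f_k$: position $|p|+1-k$ coincides with $z[|p|+1]=1$, and position $|p|$ coincides with $z[k+|p|]$, which equals $1$ when $k=1$ and equals $q[k-2]=p[k-2]$ when $2\leq k\leq|p|+1$ (using $q[0..|p|]=p0$ from the previous step). Assuming $f_k=p0$: for $k=1$, position $|p|$ gives $1=p0[|p|]=0$, contradiction; for $2\leq k\leq|p|+1$, position $|p|+1-k$ yields $p[|p|+1-k]=1$ and position $|p|$ yields $p[k-2]=0$, but these two indices are reflections of each other in the palindrome $p$, so $p[|p|+1-k]=p[k-2]$, contradicting $1\neq 0$.

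The two bounds combine to give $r(z,m)=|p|+2$. The main obstacle is selecting the correct pair of positions of $f_k$: once one spots that the constraints $p[|p|+1-k]=1$ and $p[k-2]=0$ live at mirror-image indices in $p$, the palindromicity of $p$ collapses the case analysis to the single impossibility $1=0$.
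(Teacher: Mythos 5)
Your proof is correct, but it takes a genuinely different route from the paper's. The paper argues by induction on $|z|$: the base case is $z=0^{|p|+1}10^{|p|+1}$, and the inductive step decomposes $q=p01u$, gets $r(z,|u|+1)=|u|+2$ from the induction hypothesis, observes that $u10p0$ is not a prefix of $z=u10p10p$ so that $r(z,|p|+1)\neq r(z,|p|)$, and then invokes Theorem 5 (the equivalence $r(x,m)=m+1\Leftrightarrow r(x,m)\neq r(x,m-1)$) to conclude. You avoid both the induction and Theorem 5: you use Lemma 2 only to reduce the problem to locating the first recurrence of the prefix $p0$ along the shifts of $c_\alpha$, you produce that recurrence at shift $|p|+2$ from the palindromic identity $z=p01q=q10p$, and you exclude shifts $1,\dots,|p|+1$ by the mirror-index clash $p[|p|+1-k]=p[k-2]$ forcing $1=0$. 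Your version is more elementary and pinpoints exactly which letters conflict, at the cost of index bookkeeping; the paper's version is shorter but leans on the recursive structure of central words and on Theorem 5. Two facts you use implicitly and could state explicitly: first, $|p|\neq|q|$ (hence $|q|\geq|p|+1$) because $z$ is a palindrome, which is what legitimizes the identification of the length-$(|p|+1)$ prefix of $q$ with $p0$; second, the factors $f_k$ for $1\leq k\leq|p|+1$ lie entirely inside $z$ (since $k+|p|\leq 2|p|+1\leq|z|-2$), so checking occurrences of $p0$ within $z$ really does account for all the relevant shifts of $c_\alpha$.
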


\begin{proof}
Let $c_\alpha$ be a caracteristic word having $z$ as a prefix. Then by the preceeding lemma we have $\mathbb{P}_{|p|+1}(T^{r(z,|p|+1)}(c_\alpha))=p0$

We prove the result by induction on $|z|$. Since $|z|$ is palindromic we cannot have $|p|=|q|$ and if $|p|=|q|-1$ then $q=p0=0p$ so $z=0^{|p|+1}10^{|p|+1}$ and the result is clear. Assume that $|p|\leq|q|-2$ and write $q=p01u$, $u$ is palindromic since $z=q10p=p01u10p$ is palindromic so that $q=p01u$ is the decomposition of $q$ as a central word.

If $|p|\leq |u|$ then we are done by induction. Assume that $|u|\leq |p|$ so that $r(z,|u|+1)=r(q,|u|+1)=|u|+2$ by induction. Since $r(z,|u|+1)\leq r(z,|p|) \leq |u|+2$ we have $r(z,|p|)=|u|+2$. But $z=u10p10p$ so that $u10p0$ is not a prefix of $z$ and we must have $r(z,|p|+1)>r(z,|u|+1)=|u|+2=r(z,|p|)$, and hence $(z,|p|+1)\neq r(z,|p|)$. By theorem 5, we have $r(z,|p|+1)=|p|+2$.
\end{proof}

\begin{coro}
Let $c_\alpha$ be the caracteristic Sturmian word of slope $\alpha$, and let $(q_n)$ be the sequence of continuant of $\alpha$. Then for all $n\geq 0$ we have
\begin{center}
	$r(c_\alpha)=q_n$ \quad for all \quad $q_n-1\leq m \leq q_{n+1}-2$. \quad \quad $(m\neq 0)$
\end{center}
\end{coro}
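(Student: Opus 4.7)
The plan is to sandwich $r(c_\alpha,m)$ at the two endpoints of the interval $[q_n-1,\,q_{n+1}-2]$ and then fill in using the monotonicity of $r(c_\alpha,\cdot)$ in $m$, which is immediate from the definition.

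For the lower bound $r(c_\alpha,q_n-1)\geq q_n$, I apply Lemma~3 to the central prefix $(s_{n+1}s_n)^{--}$ of $c_\alpha$, which has length $q_{n+1}+q_n-2$. By Proposition~3(1) one has $(s_{n+1}s_n)^{--}=(s_ns_{n+1})^{--}$, so this central word can be written in either of the two forms $s_n\cdot s_{n+1}^{--}$ and $s_{n+1}\cdot s_n^{--}$. Combined with the suffix computation of Proposition~4(3) (which determines whether $s_n$ ends with $01$ or $10$ according to the parity of $n$), the unique palindromic decomposition $p\cdot 01\cdot q$ of $(s_{n+1}s_n)^{--}$ is seen to satisfy $\{|p|,|q|\}=\{q_n-2,\,q_{n+1}-2\}$, so $\min(|p|,|q|)=q_n-2$. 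Lemma~3, read symmetrically (its inductive proof already handles both subcases $|p|\leq|u|$ and $|u|\leq|p|$), then yields $r(c_\alpha,q_n-1)=q_n$.

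For the upper bound $r(c_\alpha,q_{n+1}-2)\leq q_n$, it suffices to exhibit the reoccurrence
\[
\mathbb{P}_{q_{n+1}-2}(T^{q_n}(c_\alpha))=L_{q_{n+1}-2}=s_{n+1}^{--}.
\]
Writing $s_{n+1}=s_n^{a_{n+1}}s_{n-1}$ and using that $c_\alpha$ continues beyond $s_{n+1}$ with a copy of either $s_{n+1}$ or $s_n$ (both beginning with $s_n$, hence with $s_n^{--}$), the left-hand side is $s_n^{a_{n+1}-1}\,s_{n-1}\cdot s_n^{--}$, while the right-hand side is $s_n^{a_{n+1}}\,s_{n-1}^{--}$. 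Cancelling the common left factor $s_n^{a_{n+1}-1}$, the required identity reduces to $s_{n-1}\cdot s_n^{--}=s_n\cdot s_{n-1}^{--}$, which is precisely Proposition~3(1) applied to the standard pair $(s_{n-1},s_n)$.

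Combining these two bounds with monotonicity gives $q_n\leq r(c_\alpha,q_n-1)\leq r(c_\alpha,m)\leq r(c_\alpha,q_{n+1}-2)\leq q_n$ for every $m\in[q_n-1,\,q_{n+1}-2]$, so $r(c_\alpha,m)=q_n$ throughout. The main obstacle is the bookkeeping for the palindromic decomposition of $(s_{n+1}s_n)^{--}$, and the need to read Lemma~3 in its symmetric form when for one parity of $n$ the shorter of the two palindromes appears as $q$ rather than as $p$; both steps are clean consequences of Proposition~3(1) combined with Proposition~4(3).
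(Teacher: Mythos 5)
Your proof follows essentially the same route as the paper's: Lemma~3 applied to the central prefix $s_n^{--}t_ns_{n+1}^{--}=(s_{n+1}s_n)^{--}$ pins the value $q_n$ at the left endpoint $m=q_n-1$, the reoccurrence of $s_{n+1}^{--}$ at position $q_n$ (via $(s_ns_{n+1})^{--}=(s_{n+1}s_n)^{--}$) caps the right endpoint, and monotonicity of $r(c_\alpha,\cdot)$ fills the interval --- in fact you supply the computation behind the paper's unjustified assertion that $r(c_\alpha,|s_{n+1}^{--}|)\leq|s_n|$. The only piece you omit that the paper treats explicitly is the degenerate range $n\in\{0,1\}$ for small $a_1,a_2$, where $(s_{n+1}s_n)^{--}$ can be a power of a letter (so it has no decomposition $p01q$ and Lemma~3 does not apply); there $q_n=1$ and the lower bound is just the trivial $r(c_\alpha,m)\geq 1$, so this is a cosmetic rather than substantive gap.
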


\begin{proof}
Let $(s_n)$ be the sequence associated to $\alpha$ defined as in theorem $4$, so that $c_\alpha=\lim s_n$. It is easily checked that $|s_n|=q_n$ for $n\geq 0$. We have
\begin{center}
	$c_\alpha=\lim s_{n+2}= \lim s_{n+1}s_n=\lim s_{n+1}s_n^{--}=\lim s_n^{--}t_ns_{n+1}^{--}$
\end{center}
where $t_n=10$ if $n\geq 2$ is even and $t_n=01$ if $n\geq 2$ is odd. The words $s_n^{--}t_ns_{n+1}^{--}$ are the central prefixes of $c_\alpha$ and we have $r(c_\alpha,|s_n^{--}|+1)=|s_n^{--}|+2=|s_n|$ and since $r(c_\alpha,|s_{n+1}^{--}|)\leq |s_n|$, we have
\begin{center}
	$r(c_\alpha,m)=|s_n|=q_n$
\end{center}
for $n\geq 2$ and $q_n-1\leq m \leq q_{n+1}-2$.

If $a_1\geq 3$, then it is easily checked that the formula still holds for $1\leq m \leq q_2-2$. If $a_1=2$, or $a_1=1$ and $a_2\geq 2$, then the formulas hold but the set of integer $m$ such that $1\leq m \leq q_1-2$ is empty. If $a_1=1$ and $a_2=2$ then the formulas hold, but the sets of integers $m$ such that $1\leq m \leq q_1-2$ or $q_1-1\leq m \leq q_2-2$ are empty.
\end{proof}

\subsection{Rauzy graph of Sturmian words}

Let $x$ be a Sturmian word of slope $\alpha$ whose sequence of continuant is $(q_n)$.

\bigskip

Notations : \begin{itemize}
	\item In the remaining part of the article, we make the abuse of notation of noting $[a,b]$ the integer interval of integers $m$ such that $a\leq m \leq b$.
	\item We define the integer intervals $I_n$, for $n\geq 0$,
	\begin{center}
		$I_n=[q_n-1,q_{n+1}-2]$
		
    $I_n^0=[q_n-1,q_n+q_{n-1}-2]$
	\end{center}
and for $1\leq l \leq a_{n+1}-1$,
\begin{center}
	$I_n^l=[lq_n+q_{n-1}-1,(l+1)q_n+q_{n-1}-2]$.
\end{center}
If $a_1=1$ or $a_1=2$ then $I_0$ is empty. If $a_1=1$ and $a_2=1$, then both $I_0$ and $I_1$ are empty.
\item An Eulerian path in a directed graph is a path that does not pass twice on the same arrow. A cycle in a directed graph is an Eulerian path $s_1\rightarrow s_2 \rightarrow \ldots \rightarrow s_k$ such that $s_1=s_k$ and we set $k$ to be its length.
\end{itemize}

We recall the notation $u^*$ for a finite word $u$, denoting the suffix of length $|u|-1$ of $u$, which is $u$ deprived of its first letter.

\begin{prop}
Let $m\in I_n^l$ for $n\geq 0$ and $1\leq l \leq a_{n+1}-1$, then :
\begin{enumerate}
	\item one of the two cycles of $G_m$ is of length $q_n$. It is called the referent cycle.
	\item the other cycle is of length $lq_n+q_{n-1}$.
	\item The arrow $R_m\rightarrow R_m^*t_{n-1}^{-}$ belongs to the referent cycle, and the arrow $R_m\rightarrow R_m^*t_{n}^{-}$ belongs to the non-referent cycle. These two arrows do not belong to the same cycle.
\end{enumerate}
\end{prop}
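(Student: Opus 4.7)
The plan is to reduce to the word $c_\alpha$ and combine the preceding Lemma and Corollary with the explicit description of a well-chosen central prefix. Since all Sturmian words of slope $\alpha$ share the same set of factors (Theorem~3), the graph $G_m$ depends only on $\alpha$, so I take $x=c_\alpha$. The referent cycle of length $q_n$ will then be the orbit of $L_m$ under $T$ in $c_\alpha$: since $I_n^l\subset I_n$, Corollary~1 gives $r(c_\alpha,m)=q_n$, and Lemma~2 says $\mathbb{P}_m(T^{q_n}(c_\alpha))=L_m$, so $L_m\to TL_m\to\cdots\to T^{q_n-1}L_m\to L_m$ is a closed cycle of length $q_n$ in $G_m$, which settles~(1).

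For~(2), introduce the word $W:=(s_n^{l+1}s_{n-1})^{--}$. By Proposition~4, $s_n$ and $s_{n-1}$ form a standard pair (in an order depending on the parity of $n$); iterating the appropriate operation $(u,v)\mapsto(u,uv)$ or $(u,v)\mapsto(vu,v)$ shows that $s_n^{l+1}s_{n-1}$ is standard, so $W$ is central and a prefix palindrome of some characteristic word. Moreover $W$ is actually a prefix of $c_\alpha$: since $c_\alpha$ begins with $s_{n+1}=s_n^{a_{n+1}}s_{n-1}$ with $l+1\le a_{n+1}$ and $s_n=s_{n-1}^{a_n}s_{n-2}$ starts with $s_{n-1}$, the prefix of $c_\alpha$ of length $|W|=(l+1)q_n+q_{n-1}-2$ is $W$. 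The prefix of length $m$ of $W$ is $L_m$ and, by palindromicity, its suffix of length $m$ is $R_m=\widetilde{L_m}$. The successive length-$m$ windows of $W$ trace a path in $G_m$ from $L_m$ to $R_m$ of length $p:=|W|-m=(l+1)q_n+q_{n-1}-2-m$. To identify this with the common path of $G_m$, I check that no intermediate window is $L_m$ or $R_m$: since $p\le q_n-1$, an occurrence of $L_m$ at position $i+1$ with $1\le i\le p$ would contradict the first-return time $q_n$ from Lemma~2, and palindromicity of $W$ rules out $R_m$ at any intermediate position by symmetry. Since $G_m$ has $m+2$ arrows partitioned as $p+q'+r'$ between the common path and the two branches, and $p+q'=q_n$ by~(1), the non-referent cycle has length $p+r'=(m+2)+p-q_n=lq_n+q_{n-1}$.

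For~(3), the out-edge of $R_m$ traversed by the referent cycle is determined by the letter of $c_\alpha$ at position $|W|+1=(l+1)q_n+q_{n-1}-1$. Since $c_\alpha$ begins with $s_n^{a_{n+1}}s_{n-1}$ and $l+1\le a_{n+1}$, this position lies either inside the $(l+2)$-th $s_n$ block (if $l+1<a_{n+1}$) or inside the trailing $s_{n-1}$ block (if $l+1=a_{n+1}$). Using that $s_n$ begins with $s_{n-1}$, in both cases the letter is the $(q_{n-1}-1)$-th letter of $s_{n-1}$, namely its penultimate letter, which by Proposition~4 equals $t_{n-1}^{-}$. Hence the referent arrow is $R_m\to R_m^*t_{n-1}^{-}$, and the remaining out-edge of $R_m$ is forced to be $R_m\to R_m^*t_n^{-}$, since $t_n^{-}$ and $t_{n-1}^{-}$ are opposite letters.

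The main obstacle is the structural content of~(2): producing the central word $W$ and showing no intermediate length-$m$ window of $W$ is special, which pins down the common path of $G_m$ and, combined with the edge count, yields the second cycle length. The remaining parts --- the cycle count and the letter identification for~(3) --- are bookkeeping with the decomposition $s_{n+1}=s_n^{a_{n+1}}s_{n-1}$, the recursion $s_n=s_{n-1}^{a_n}s_{n-2}$, and the parity rule for $t_n$.
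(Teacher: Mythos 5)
Your proof is correct and follows essentially the same route as the paper: reduce to $c_\alpha$, get the referent cycle of length $q_n$ from $r(c_\alpha,m)=q_n$, identify the common part of the two cycles with the central word $s_n^{l+1}s_{n-1}^{--}$ and count to obtain the second length, then read off the letter of $c_\alpha$ following this central prefix for part (3). The only differences are cosmetic: you verify explicitly (via the first-return time $q_n$ and palindromy of $W$) that the window-path of $W$ is the common path, where the paper characterizes it as the shortest central word of length at least $m$, and you count arrows where the paper counts vertices.
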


\begin{proof}
1) Since two infinite words sharing the same set of factors also share the same Rauzy graphs, we can reduce to the case $x=c_\alpha$. Since $r(c_\alpha,m)=q_n$ by Corollary $2$, by definition of the repetition function the path
\begin{center}
	$\mathbb{P}_{m}(c_\alpha)\rightarrow \mathbb{P}_{m}(T(c_\alpha))\rightarrow \ldots \rightarrow \mathbb{P}_{m}(T^{r(c_\alpha,m)}(c_\alpha))$
\end{center}
defines a cycle of length $q_n$.

2) The common part of the two cycles is the shortest path that starts at the vertex $L_m$ and ends at the vertex $R_m$. The finite word $w$ defined by this path is left and right special, so it is the shortest central word of length $|w|\geq m$, and this length is equal to $(l+1)q_n+q_{n-1}-2$ and has $(l+1)q_n+q_{n-1}-1$ vertexes. The path so defined is of length $(l+1)q_n+q_{n-1}-2-m$. Since the sum of the length of the two cycles equals the sum of the number of vertexes of $G_m$ and the number of vertexes in the common part, we get that the other cycle is of length $\mu$ where
\begin{center}
	$q_n + \mu = m+1 + (l+1)q_n+q_{n-1}-1-m $
\end{center}
so that $\mu=lq_n+q_{n-1}$.

Since $q_n$ and $lq_n+q_{n-1}$ are coprime, the referent cycle is well-determined by its length.

3) The common path $L_m\rightarrow \ldots \rightarrow R_m$ corresponds to the central word of length $(l+1)q_{n}+q_{n-1}-2$, namely $s_n^{l+1}s_{n-1}^{--}$. The referent cycle is the cycle
\begin{center}
	$\mathbb{P}_{m}(c_\alpha)\rightarrow \mathbb{P}_{m}(T(c_\alpha))\rightarrow \ldots \rightarrow \mathbb{P}_{m}(T^{r(c_\alpha,m)}(c_\alpha))$
\end{center}
so we only have to see that $s_n^{l+1}s_{n-1}^{-}$ is a prefix of $c_\alpha$ since $s_{n-1}$ ends with $t_{n-1}$. But this is obvious, $s_{n-1}$ is a prefix of $s_n$, and $s_{n+1}=s_n^{a_{n+1}}s_{n-1}$, so that indeed the arrow $R_m\rightarrow R_m^*t_{n-1}^{-}$ belongs to the referent cycle. The fact that $R_m\rightarrow R_m^*t_{n}^{-}$ belongs to the non-referent cycle comes from the fact that $s_n^{l+1}s_{n-1}^{--}t_n$ is not a prefix of $c_\alpha$. It is obvious that the two arrows leaving the right special factor $R_m$ cannot be on the same cycle.

\end{proof}

\begin{defi} \begin{itemize}
	\item We say $x$ turns around a cycle of length $k$ in $G_m$ when $r(x,m)=k$ and the path $\mathbb{P}_m(x)\rightarrow \mathbb{P}_m(T(x))\rightarrow \ldots \rightarrow \mathbb{P}_m(T^k(x))$ shares the same arrow as this cycle.
	\item We say $x$ turns $d$ times around a cycle of length $k$ if for all $i=0\ldots d-1$, $T^{ik}(x)$ turns around this cycle.
\end{itemize}
\end{defi}

For a Sturmian word $x$, since the two cycles of its Rauzy graph $G_m$ have different length, $x$ turns around a cycle of length $k$ if and only if $r(x,m)=k$.

\begin{theo}
For $m\in I_n^l$, the caracteristic word $c_\alpha$ turns around the referent cycle $a_{n+1}-l$ times, and no more.
\end{theo}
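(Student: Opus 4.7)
The plan is to recast the claim as a statement about the positions of $L_m$ in $c_\alpha$, and then verify it by inspecting a sufficiently long initial segment of $c_\alpha$. Since the two cycles of $G_m$ have distinct lengths $q_n$ and $lq_n+q_{n-1}$ by Proposition 7, the remark following the definition of \emph{turning around} gives the equivalence: $T^{iq_n}(c_\alpha)$ turns around the referent cycle iff $r(T^{iq_n}(c_\alpha),m)=q_n$. I will prove by induction on $i$ that $\mathbb{P}_m(T^{iq_n}(c_\alpha))=L_m$ for $0\leq i\leq a_{n+1}-l$, that the outgoing arrow chosen at $R_m$ is the referent one for $0\leq i\leq a_{n+1}-l-1$, and that it is the non-referent one for $i=a_{n+1}-l$; together this gives exactly $a_{n+1}-l$ referent turns.

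The inductive step reduces to a single letter check. Suppose $\mathbb{P}_m(T^{iq_n}(c_\alpha))=L_m$. Every vertex of $G_m$ other than $R_m$ has out-degree $1$, so the path from $L_m$ is deterministic; by Proposition 7 it traverses the common central word $s_n^{l+1}s_{n-1}^{--}$ of length $(l+1)q_n+q_{n-1}-2$, reaching $R_m$ after $(l+1)q_n+q_{n-1}-2-m$ edges. By Proposition 7 part 3, the outgoing arrow at $R_m$ is referent iff the letter extending $R_m$, namely $c_\alpha[(i+l+1)q_n+q_{n-1}-2]$, equals $t_{n-1}^-$, and non-referent iff it equals $t_n^-$. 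A referent choice brings the path back to $L_m$ in $q_n$ further steps, propagating the induction hypothesis.

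To compute these letters I use that $c_\alpha$ starts with $s_{n+2}=s_{n+1}^{a_{n+2}}s_n$, hence with the prefix $s_{n+1}s_n=s_n^{a_{n+1}}s_{n-1}s_n$ of length $q_{n+1}+q_n$ (both cases $a_{n+2}=1$ and $a_{n+2}\geq 2$ yield this since $s_{n+1}$ starts with $s_n$). For $0\leq i\leq a_{n+1}-l-1$ the position $(i+l+1)q_n+q_{n-1}-2$ is at most $q_{n+1}-2$ and so falls inside $s_{n+1}$. Using that $s_{n-1}$ is a prefix of $s_n$ (since $s_n=s_{n-1}^{a_n}s_{n-2}$) and that $s_{n-1}$ ends with $t_{n-1}$ (Proposition 5), a brief case distinction on whether this position lies in the $s_n^{a_{n+1}}$ block or in the trailing $s_{n-1}$ gives the letter $s_{n-1}[q_{n-1}-2]=t_{n-1}[0]=t_{n-1}^-$. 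For $i=a_{n+1}-l$ the position $q_{n+1}+q_n-2$ lies in the trailing $s_n$ of $s_{n+1}s_n$ at local index $q_n-2$, so the letter is $s_n[q_n-2]=t_n[0]=t_n^-$, forcing the non-referent choice.

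The main obstacle is the small-$n$ regime $q_{n-1}<2$ (i.e.\ $n=1$, or $n=2$ with $a_1=1$, plus the degenerate $n=0$), where $t_{n-1}$ is not covered by Proposition 5 and $s_{n-1}[q_{n-1}-2]$ is ill-defined. For those parameters several intervals $I_n^l$ become empty or require $m<1$, making the statement vacuous; the remaining subcases are handled by reading the relevant letters directly off the prefix $s_{n+1}s_n$ and checking by parity that $s_n[q_n-1]=t_n[1]$ agrees with the letter demanded by Proposition 7 part 3. Modulo these boundary verifications, the induction is uniform and yields the claim.
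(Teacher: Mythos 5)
Your proposal is correct in outline, and for the lower bound it takes a genuinely different route from the paper. The paper obtains ``at least $a_{n+1}-l$ turns'' through the repetition function of central words: for $l=0$ the word $z=s_{n+1}s_n^{--}=s_n^{a_{n+1}+1}s_{n-1}^{--}$ is central, its shifts $T^{iq_n}(z)=s_n^{a_{n+1}+1-i}s_{n-1}^{--}$ are again central prefixes of $c_\alpha$, so Corollary 2 yields $r(T^{iq_n}(c_\alpha),m)=q_n$ for $i=0,\ldots,a_{n+1}-1$, and turning around a cycle is characterized by the value of $r$. You instead induct on the number of completed turns, tracking the occurrence of $L_m$ at position $iq_n$, exploiting that every vertex other than $R_m$ has out-degree $1$, and deciding the branch at $R_m$ by reading one letter of the prefix $s_{n+1}s_n=s_n^{a_{n+1}}s_{n-1}s_n$, which Proposition 7 part 3 converts into the referent/non-referent dichotomy. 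For the upper bound the two arguments essentially coincide: both reduce to the letter at position $q_{n+1}+q_n-2$, i.e.\ to the fact that $s_ns_{n-1}^{--}t_n$ is a prefix of $T^{a_{n+1}q_n}(c_\alpha)$. What your version buys is independence from Lemma 2 and Corollary 2, and a uniform treatment of all $l$ (the paper writes out only $l=0$ and declares $l>0$ ``similar''); what it costs is the boundary bookkeeping in the regime $q_{n-1}<2$, where $t_{n-1}$ and the index $q_{n-1}-2$ degenerate --- a defect already present in the statement of Proposition 7, so you are not worse off than the paper. Two points to make explicit in a full write-up: first, the passage from ``the arrow taken at $R_m$ is the referent one'' to ``$r(T^{iq_n}(c_\alpha),m)=q_n$'' needs the observation that the walk then traverses the $q_n$ distinct vertices of the referent cycle before first repeating a vertex (namely $L_m$), so that the remark following Definition 10 applies; second, the letter computation at position $jq_n+q_{n-1}-2$ for $j<a_{n+1}$ uses that $s_{n-1}$ is a prefix of $s_n$, which for $n=1$ requires $a_1\geq 2$ and should be folded into your boundary cases.
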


\begin{proof}
We first consider the case where $l=0$. Then the central word $s_n^{--}$ is a strict prefix of $L_m$ and $L_m$ is a strict prefix of the central word $s_n^{--}t_ns_{n-1}^{--}$. The word $z=s_{n+1}s_n^{--}=s_n^{a_{n+1}+1}s_{n-1}^{--}$ is central and so we have
\begin{center}
	$r(z,m)=q_n=r(T^{q_n}(z),m)=\ldots=r(T^{(a_{n+1}-1)q_n}(c_\alpha),m)$,
\end{center}
showing that $c_\alpha$ turns at least $a_{n+1}$ times around the referent cycle.

Since $s_{n+1}s_n$ is a prefix of $c_\alpha$, $s_{n+1}s_n=zt_n=s_n^{a_{n+1}+1}s_{n-1}^{--}t_n$ is a prefix of $c_\alpha$ and $s_ns_{n-1}^{--}t_n$ is a prefix of $T^{a_{n+1}q_n}(c_\alpha)$ and from this we easily see that the word $T^{a_{n+1}q_n}(c_\alpha)$ passes by the arrow $R_m\rightarrow R_m^*t_{n}^{-}$ before passing by the arrow $R_m\rightarrow R_m^*t_{n-1}^{-}$. This shows that $c_\alpha$ does not turn $a_{n+1}+1$ times around the referent cycle.

The case $l>0$ is similar.
\end{proof}

\begin{lem}
Let $x$ be a Sturmian word of slope $\alpha$, and let $m>0$. Then $x$ does not turn twice around the non-referent cycle.
\end{lem}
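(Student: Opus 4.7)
I argue by contradiction, reducing to $c_\alpha$ via shared factors and then invoking Theorem 6.

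Assume $x$ turns twice around the non-referent cycle of length $k = lq_n + q_{n-1}$. Both traversals take the same non-referent arrow at $R_m$, so $\mathbb{P}_{m+1}(T^i(x)) = \mathbb{P}_{m+1}(T^{i+k}(x))$ for every $0 \le i \le k-1$. Comparing last letters, the length-$(m+2k)$ prefix of $x$ has period $k$. Since Sturmian words of the same slope share the same set of factors, this prefix is a factor of $c_\alpha$, and so some $T^j(c_\alpha)$ starts with it. The first $2k$ arrows of $x$ and of $T^j(c_\alpha)$ in $G_m$ then coincide, so $T^j(c_\alpha)$ also turns twice around the non-referent cycle, and it suffices to rule this out.

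To do so, I examine the sequence of cycle-choices made by $c_\alpha$ at $R_m$, recording $R$ for the referent arrow and $N$ for the non-referent arrow; turning twice around the non-referent cycle is precisely two consecutive $N$'s in this sequence. By Theorem 6, the first $a_{n+1}-l$ entries are $R$'s and the $(a_{n+1}-l+1)$-th is an $N$, at which point exactly $q_{n+1}$ letters of $c_\alpha$ (one $s_{n+1}$ block) have been consumed and the path is back at $L_m$, since each cycle is a closed loop through $L_m$. Iterating along the Ostrowski-type decomposition $c_\alpha = (s_n^{a_{n+1}}s_{n-1})^{a_{n+2}}s_n\cdots$, each subsequent $s_{n+1}$ block contributes another $R^{a_{n+1}-l}N$, while the isolated $s_n$ blocks at higher-level junctions add only single extra $R$'s. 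Since $l \le a_{n+1}-1$, two consecutive $N$'s never occur in $c_\alpha$'s choice sequence; the cycle-choice sequence of any $T^j(c_\alpha)$ is a suffix of $c_\alpha$'s and inherits this property, which is the desired contradiction.

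The delicate step is the iterative reapplication of Theorem 6 to each $s_{n+1}$ block, since the theorem is literally stated only for $c_\alpha$. One must verify that after reading one $s_{n+1}$ block the word $T^{q_{n+1}}(c_\alpha)$ is in the situation $\mathbb{P}_m = L_m$ with next letters forming another copy of $s_{n+1}$ (or an $s_n$ then an $s_{n+1}$ at higher-order boundaries), so that the argument of Theorem 6 applies verbatim to the next block. The $s_n$ insertions are harmless since they only widen the gap between consecutive $N$'s.
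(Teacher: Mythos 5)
Your reduction from an arbitrary Sturmian $x$ to some shift $T^j(c_\alpha)$ is correct and cleanly argued: two consecutive traversals of the non-referent cycle force a $k$-periodic prefix of length $m+2k$, this prefix is a factor of $c_\alpha$, and the first $2k$ arrows (hence the ``turns twice'' property) are determined by that prefix. The problem is the second half. The claim that the cycle-choice sequence of $c_\alpha$ at $R_m$ decomposes as blocks $R^{a_{n+1}-l}N$ coming from $s_{n+1}$-factors, interleaved with single $R$'s coming from $s_n$-factors, is not a consequence of Theorem 6: that theorem only describes the \emph{first} $a_{n+1}-l$ visits of $c_\alpha$ itself to $R_m$ and says nothing about $T^{q_{n+1}}(c_\alpha)$, nor about the words $T^{jq_{n+1}}(c_\alpha)$ encountered at higher-level block boundaries. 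To make your iteration work you would need to prove (i) that after consuming $q_{n+1}$ letters the path is back at $L_m$ \emph{and} the next $q_{n+1}$ or $q_n$ letters again form a full $s_{n+1}$ or $s_n$ block aligned with the cycle structure, and (ii) that the sequence of blocks at every level of the hierarchy keeps this alignment --- an induction over the Ostrowski decomposition of $c_\alpha$ that you explicitly defer (``one must verify that\dots''). Since the statement ``no two consecutive $N$'s occur in $c_\alpha$'s choice sequence'' is exactly the lemma restricted to shifts of $c_\alpha$, deferring this verification leaves the essential content unproved.

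For comparison, the paper avoids the hierarchical bookkeeping entirely with a symmetry-plus-balance argument. Closure of the factor set under reversal sends each arrow $s\rightarrow t$ of $G_m$ to $\tilde t\rightarrow\tilde s$; since the two cycles have distinct lengths, each cycle is sent to itself, so the letter by which the referent cycle exits $R_m$ equals the letter by which it enters $L_m=\widetilde{R_m}$, and likewise for the non-referent cycle. Writing $w$ for the central word carried by the common path, turning twice around the referent cycle (which the word $uc_\alpha$ does, for a suitable $u$ of length $q_n$) exhibits $awa$ as a factor, and turning twice around the non-referent cycle would exhibit $\bar a w\bar a$; together these are $0w0$ and $1w1$ with $w$ a palindrome, contradicting Lemma 1. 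If you want to salvage your approach, the honest route is to prove the block/choice correspondence by induction on the level $n$; otherwise the paper's reversal argument is the shorter path.
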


\begin{proof}
Since the set of factors of a Sturmian word is stable under reversal, we see that if $s\rightarrow t$ is an arrow of $G_m$, then $\tilde{t}\rightarrow \tilde{s}$ is an arrow of $G_m$. Since the two cycles of $G_m$ are of different length, we can derive from the fact that only one of the two arrows
\begin{center}
	$R_m\rightarrow R_m^*t_{n-1}^{-}$ \quad and $R_m\rightarrow R_m^*t_{n}^{-}$
\end{center}
belongs to the referent cycle the fact that only one of the two arrows
\begin{center}
	$0L_m^{-}\rightarrow L_m$ and $1L_m^{-}\rightarrow L_m$
\end{center}
belongs to the referent cycle. The two words $0c_\alpha$ and $1c_\alpha$ are Sturmian, and so there is a unique word $u$ of length $q_n$ such that $uc_\alpha$ is Sturmian and turns around the referent cycle. Since $c_\alpha$ always turns at least once around the referent cycle, $uc_\alpha$ turns twice around the referent cycle.

If there is a Sturmian word $x$ that turns twice around the non-referent cycle, wee see from the preceeding argument that the central word $w$ defined by the common part of $G_m$ is such that the four word $0w0$, $1w0$, $0w1$ and $1w1$ are factors of $x$. But this contradicts the balanced property of Sturmian words.
\end{proof}

\section{Formal Intercepts of Sturmian words}

We still consider a slope $\alpha$ with continuants $(q_n)$.

\begin{prop} Let $\displaystyle N=\sum_{i=0}^{k-1}b_{i+1}q_i$ with $b_i\geq 0$ for all $i\geq 2$ and $n\geq 1$. Then the following statements are equivalent :
\begin{enumerate}[i)]
	\item $\forall l=1\ldots k$, $\displaystyle\sum_{i=0}^{l-1}b_{i+1}q_i<q_{l}$
	\item We have :	\begin{itemize}	\item $0\leq b_1 \leq a_1-1$
		\item $\forall i\geq 1$, $0\leq b_i \leq a_i$
		\item $\forall i\geq 1$, $b_{i+1}=a_{i+1}\Rightarrow b_i=0$
	\end{itemize}
\end{enumerate}
\end{prop}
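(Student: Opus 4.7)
The plan is to prove the two implications separately using the recurrence $q_{l+1}=a_{l+1}q_l+q_{l-1}$ together with strong induction on $l$. A preliminary observation I would record is the monotonicity $q_{l-2}\leq q_{l-1}$ for all $l\geq 2$, which is immediate from the recurrence and the initial values $q_{-1}=0$, $q_0=1$.

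For (i) $\Rightarrow$ (ii), I would specialise the inequality of (i) at well-chosen values of $l$. Taking $l=1$ gives directly $b_1<q_1=a_1$, whence $b_1\leq a_1-1$. For the bound $b_i\leq a_i$ at $i\geq 2$, take $l=i$ and keep only the top summand on the left: since every term is non-negative, $b_iq_{i-1}\leq\sum_{j=0}^{i-1}b_{j+1}q_j<q_i=a_iq_{i-1}+q_{i-2}\leq(a_i+1)q_{i-1}$ by the monotonicity above, hence $b_i\leq a_i$. For the carry-type clause, suppose for contradiction that $b_{i+1}=a_{i+1}$ and $b_i\geq 1$; then the case $l=i+1$ of (i) yields $\sum_{j=0}^{i}b_{j+1}q_j\geq b_{i+1}q_i+b_iq_{i-1}\geq a_{i+1}q_i+q_{i-1}=q_{i+1}$, contradicting the strict inequality.

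For (ii) $\Rightarrow$ (i), I would proceed by strong induction on $l$. The base case $l=1$ reduces to $b_1\leq a_1-1<a_1=q_1$. For the inductive step from levels $\leq l$ to $l+1$, I split on the value of $b_{l+1}$. If $b_{l+1}\leq a_{l+1}-1$, the inductive hypothesis at level $l$ gives $\sum_{j=0}^{l-1}b_{j+1}q_j<q_l$, so $\sum_{j=0}^{l}b_{j+1}q_j<(b_{l+1}+1)q_l\leq a_{l+1}q_l\leq q_{l+1}$. If instead $b_{l+1}=a_{l+1}$, the third clause of (ii) forces $b_l=0$, and the inductive hypothesis at level $l-1$ (or the trivial observation that the tail is empty when $l=1$) gives $\sum_{j=0}^{l-2}b_{j+1}q_j<q_{l-1}$; therefore $\sum_{j=0}^{l}b_{j+1}q_j=a_{l+1}q_l+\sum_{j=0}^{l-2}b_{j+1}q_j<a_{l+1}q_l+q_{l-1}=q_{l+1}$.

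I do not foresee a significant obstacle: both directions amount to a verification of the greedy Ostrowski representation. The only place requiring real care is the case $b_{l+1}=a_{l+1}$ in the second implication, where the constraint $b_l=0$ has to be used in exactly the right way to create the slack $q_{l-1}$ that, together with $a_{l+1}q_l$, reconstitutes $q_{l+1}$ through the recurrence.
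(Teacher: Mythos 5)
Your proof is correct and follows essentially the same route as the paper: both directions rest on the recurrence $q_{l+1}=a_{l+1}q_l+q_{l-1}$, with the bound $b_i\le a_i$ obtained from $q_i\le (a_i+1)q_{i-1}$ and the converse handled by induction on $l$, splitting on whether $b_{l+1}<a_{l+1}$ or $b_{l+1}=a_{l+1}$ (where $b_l=0$ supplies the slack $q_{l-1}$). Your version merely makes explicit two points the paper leaves implicit — that the induction is a strong induction (the case $b_{l+1}=a_{l+1}$ invokes the hypothesis at level $l-1$) and the empty-sum edge case at $l=1$ — which is a welcome clarification but not a different argument.
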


\begin{proof}
\underline{$i)\Rightarrow ii)$ :} Since $q_1=a_1$, the first line of $ii)$ is easily checked. Let $j\geq 1$, then if $b_j> a_j$ we have $b_jq_{j-1}\sum_{i=0}^{j-1}b_{i+1}q_i<q_{j}=a_jq_{j-1}+q_{j-2}\leq (a_j+1)q_{j-1}$ which is absurd. If $b_{j+1}=a_{j+1}$ then from $\sum_{i=0}^{j}b_{i+1}q_i<q_{j+1}=a_{j+1}q_j+q_{j-1}$ we get $\sum_{i=0}^{j-1}b_{i+1}q_i<q_{j-1}$ which clearly implies $b_{j}=0$.

\underline{$ii)\Rightarrow i)$ :} The result is clear for $l=1$ and we prove the result by induction on $l$. Assume $\sum_{i=0}^{l-1}b_{i+1}q_i<q_{l}$. If $b_{l+1}<a_{l+1}$ then $\sum_{i=0}^{l}b_{i+1}q_i<q_{l}+b_{l+1}q_l\leq a_{l+1}q_l < q_{l+1}$. If $b_{l+1}=a_{l+1}$ then by assumption $b_l=0$ so that $\sum_{i=0}^{l-1}b_{i+1}q_i<q_{l-1}$ and $\sum_{i=0}^{l}b_{i+1}q_i<q_{l-1}+a_{l+1}q_l=q_{l+1}$.
\end{proof}

For a sequence $(b_i)_{i\geq 1}$, we call the conditions of Proposition $8$ as the Ostrowski conditions.

\begin{prop} Every integer $N\in [0,q_n[$ can be written uniquely in the form
	\begin{center}
		$\displaystyle N=\sum_{i=0}^{n-1}b_{i+1}q_i$
	\end{center}
	where the integers $(b_i)$ satisfy the Ostrowski conditions.
\end{prop}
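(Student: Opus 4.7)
The plan is to establish existence by a greedy induction on $n$ using the recurrence $q_{n+1}=a_{n+1}q_n+q_{n-1}$, and to deduce uniqueness from the bound on partial sums furnished by Proposition~8.

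For existence, I would argue by induction on $n$. The case $n=1$ is immediate: $N\in[0,q_1[=[0,a_1[$ is written $N=b_1 q_0=b_1$ with $0\leq b_1\leq a_1-1$, and the Ostrowski conditions reduce to this inequality. For the inductive step, take $N\in[0,q_{n+1}[$ and set $b_{n+1}=\lfloor N/q_n\rfloor$; since $N<a_{n+1}q_n+q_{n-1}\leq (a_{n+1}+1)q_n$, we have $0\leq b_{n+1}\leq a_{n+1}$. Two cases arise. If $b_{n+1}\leq a_{n+1}-1$, then $N-b_{n+1}q_n\in[0,q_n[$ and the inductive hypothesis gives a representation $N-b_{n+1}q_n=\sum_{i=0}^{n-1}b_{i+1}q_i$ satisfying the Ostrowski conditions; appending $b_{n+1}$ preserves them since the last clause is trivial when $b_{n+1}<a_{n+1}$. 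If $b_{n+1}=a_{n+1}$, then $N-a_{n+1}q_n\in[0,q_{n-1}[$, and the inductive hypothesis applied at rank $n-1$ yields a representation with $b_n=0$ automatically (no $q_{n-1}$-term appears), which is precisely what the last Ostrowski clause requires.

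For uniqueness, suppose $N$ admits two representations $\sum_{i=0}^{n-1}b_{i+1}q_i=\sum_{i=0}^{n-1}c_{i+1}q_i$, both satisfying the Ostrowski conditions, and let $k$ be the largest index at which they differ. Without loss of generality $b_{k+1}>c_{k+1}$, so
\[
(b_{k+1}-c_{k+1})q_k=\sum_{i=0}^{k-1}(c_{i+1}-b_{i+1})q_i.
\]
The left-hand side is at least $q_k$, while by Proposition~8 applied to the sequence $(c_i)$ the right-hand side is at most $\sum_{i=0}^{k-1}c_{i+1}q_i<q_k$. This contradiction forces the two representations to coincide.

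I expect the main subtlety to be the edge case $b_{n+1}=a_{n+1}$ in the existence step, because one must both shrink the range from $[0,q_{n+1}[$ to $[0,q_{n-1}[$ and then insert a forced zero digit $b_n=0$ to satisfy the last Ostrowski clause; the rest of the argument is essentially bookkeeping on the recurrence $q_{n+1}=a_{n+1}q_n+q_{n-1}$.
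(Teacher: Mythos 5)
Your proof is correct and follows essentially the same route as the paper: existence by the greedy Euclidean division $b_{n+1}=\lfloor N/q_n\rfloor$ with the special case $b_{n+1}=a_{n+1}$ forcing the drop to $[0,q_{n-1}[$ and the zero digit $b_n=0$, and uniqueness resting on the partial-sum bound of Proposition~8. Your uniqueness argument (comparing two representations at the largest differing index) is in fact spelled out more explicitly than in the paper, which folds uniqueness into the induction, but the underlying mechanism is identical.
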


\begin{proof}
We proceed by induction on $N$. Write $N=b_nq_{n-1}+c$ with $c\in[0,q_{n-1}[$. By induction, $c$ can be written uniquely in the form $c=\sum_{i=0}^{n-2}b_{i+1}q_i$ where the coefficient $(b_i)_{i=1}^{n-1}$ satisfy the Ostrowski conditions. It is obvious that $b_n\leq a_n$. If $b_n=a_n$, then we must have $c<q_{n-2}$ and by induction on the unicity we must have $b_{n-1}=0$ so that the sequence $(b_i)$ indeed satisfy the Ostrowski conditions.
\end{proof}

\begin{defi}
We define the set :
\begin{center}
	$\displaystyle\mathcal{I}_\alpha=\left\{\left.(k_n)_{n>0}\in\prod_{n>0}[0,q_{n}[ \ \right| \ \forall n\geq 0,\ k_n=k_{n+1} \text{\texttt{\emph{[mod}} }q_n\text{\texttt{\emph{]}}} \right\}$
\end{center}
of formal intercepts of the slope $\alpha$.
\end{defi}

\begin{remark}
In view of proposition 8 and 9, if $\rho=(\rho_n)_{n\geq 0}$ is a formal intercept, there is a unique sequence of integers $(b_i)_{i\geq 1}$, satisfying the Ostrowski conditions, such that
\begin{center}
	$\displaystyle \rho_n=\sum_{i=0}^{n-1}b_{i+1}q_i$
\end{center}
for all $n\geq 0$. In this case, we directly write :
\begin{center}
	$\displaystyle \rho=\sum_{i=0}^{+\infty}b_{i+1}q_i$.
\end{center}
\end{remark}

\begin{remark}
For $n>0$, we define :
\begin{center}
	$\Psi_n^{n+1}$ : $\begin{matrix}
		 [0,q_{n+1}[ & \longmapsto & [0,q_{n}[  \\
		 k & \longmapsto & k $\texttt{ [mod $q_n$]}$  \\
		\end{matrix}$
\end{center}
and for integers $m\geq n >0$ :
\begin{center}
	$\Psi_n^{m}=\Psi_n^{n+1}\circ \Psi_{n+1}^{n+2}\circ \cdots \circ\Psi_{m-1}^{m}\ :\ [0,q_{m}[ \ \rightarrow [0,q_{n}[$
\end{center}
then
\begin{center}
	$\displaystyle \mathcal{I}_\alpha=\lim_{\longleftarrow}[0,q_n[=\left\{\left. (k_n)_{n>0}\in\prod_{n>0}[0,q_{n}[ \ \right| \ n\leq m \Rightarrow \Psi_n^{m}(k_m)=k_n \right\}$
\end{center}
may be viewed as the projective limit of the sets $[0,q_n[$ endowed with the functions $\Psi_n^{m}$.
\end{remark}

\begin{prop}
Let $\rho=\sum_{i\geq 0}b_{i+1}q_i$ be a formal intercept of the slope $\alpha$, and $n\geq 1$. Let	$\lambda_n=q_{n+1}+q_n-\rho_{n+1}-2$, then
\begin{enumerate}
	\item The words $T^{\rho_{n}}(c_\alpha)$ and $T^{\rho_{n+1}}(c_\alpha)$ share the same prefix of length $\lambda_n$.
\item If $b_{n+1}\neq 0$, then $\lambda_n$ is the length of the longest common prefix of $T^{\rho_{n}}(c_\alpha)$ and $T^{\rho_{n+1}}(c_\alpha)$,
\item the increasing sequence $(\lambda_n)_{n\geq 1}$ is unbounded.
\end{enumerate}
\end{prop}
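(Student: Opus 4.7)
The plan is to reduce the two-shift comparisons appearing in (1) and (2) to a single-shift comparison between $c_\alpha$ and $T^{b_{n+1}q_n}(c_\alpha)$, using the palindromic central prefix $P_n=s_n^{--}t_ns_{n+1}^{--}$ of length $q_{n+1}+q_n-2$, which is a prefix palindrome of $c_\alpha$ by characterisation $(iv)$ of central words.

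Statement (1) is the equality of the factors of $c_\alpha$ at positions $[\rho_n,\rho_n+\lambda_n)$ and $[\rho_{n+1},\rho_{n+1}+\lambda_n)$. By the Ostrowski condition $\rho_{n+1}<q_{n+1}$, both intervals lie inside $[0,q_{n+1}+q_n-2)$, so the palindrome reflection $i\mapsto q_{n+1}+q_n-3-i$ applies to both. Using $\rho_{n+1}-\rho_n=b_{n+1}q_n$, the reflection sends $[\rho_{n+1},\rho_{n+1}+\lambda_n)$ to $[0,\lambda_n)$ and $[\rho_n,\rho_n+\lambda_n)$ to $[b_{n+1}q_n,b_{n+1}q_n+\lambda_n)$, so (1) becomes $\mathbb{P}_{\lambda_n}(c_\alpha)=\mathbb{P}_{\lambda_n}(T^{b_{n+1}q_n}(c_\alpha))$. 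For $b_{n+1}=0$ this is trivial; for $b_{n+1}\geq 1$, the turning theorem applied with $l=a_{n+1}-b_{n+1}$ gives $\mathbb{P}_m(c_\alpha)=\mathbb{P}_m(T^{b_{n+1}q_n}(c_\alpha))$ for every $m$ up to the upper endpoint $(a_{n+1}-b_{n+1}+1)q_n+q_{n-1}-2$ of $I_n^{a_{n+1}-b_{n+1}}$. Rewriting $\lambda_n=(a_{n+1}+1)q_n+q_{n-1}-2-\rho_{n+1}$ and using $\rho_{n+1}\geq b_{n+1}q_n$ (equivalently $\rho_n\geq 0$) shows $\lambda_n$ is at most this endpoint, closing (1).

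For tightness in (2), the identity $(s_{n+1}s_n)^{--}=P_n$ (from $(uv)^{--}=(vu)^{--}$ for standard pairs) together with $s_{n+1}s_n$ being a prefix of $c_\alpha$ (unfold $s_{n+2}=s_{n+1}^{a_{n+2}}s_n$ and use that $s_{n+1}$ starts with $s_n$) gives that the letter at position $q_{n+1}+q_n-2=\rho_{n+1}+\lambda_n$ is the first letter of $t_n$. For $b_{n+1}\geq 1$, $\rho_n+\lambda_n<q_{n+1}+q_n-2$ and palindrome reflection sends that position to $b_{n+1}q_n-1$; reading $s_{n+1}=s_n^{a_{n+1}}s_{n-1}$ at the end of the $b_{n+1}$-th $s_n$-block identifies the letter as the second letter of $t_n$. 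Since $t_n\in\{01,10\}$ has two distinct letters, (2) follows. For (3), the direct computation $\lambda_{n+1}-\lambda_n=(a_{n+2}-b_{n+2})q_{n+1}\geq 0$ gives monotonicity; if $(\lambda_n)$ were bounded it would eventually be constant, forcing $b_m=a_m$ for all large $m$, and the Ostrowski implication $b_m=a_m\Rightarrow b_{m-1}=0$ applied at two consecutive indices would demand both $b_{m-1}=0$ and $b_{m-1}=a_{m-1}\geq 1$, a contradiction.

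The conceptual heart of the argument is the palindrome reflection: it converts the two-shift comparison into a single-shift comparison to which the turning theorem applies directly, after which everything reduces to bookkeeping inside $P_n$ and the expansion $s_{n+1}=s_n^{a_{n+1}}s_{n-1}$.
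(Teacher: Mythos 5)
Your argument is correct, and it runs on the same engine as the paper's proof --- Theorem 6 applied to the pair $c_\alpha$ and $T^{b_{n+1}q_n}(c_\alpha)$ --- but the surrounding machinery is genuinely different. The paper's reduction of the two-shift comparison to the single-shift one is more direct than yours: it computes, with $m=q_n-1\in I_n^0$ and common part of the two cycles of length $q_{n-1}-1$, that $c_\alpha$ and $T^{b_{n+1}q_n}(c_\alpha)$ share a prefix of length exactly $L=(a_{n+1}-b_{n+1}+1)q_n+q_{n-1}-2$ when $b_{n+1}\neq 0$, and then simply applies $T^{\rho_n}$ to both words, so that the shifted pair shares a prefix of length exactly $L-\rho_n=\lambda_n$; this yields (1) and (2) in one stroke. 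Your palindrome reflection inside the central prefix $(s_{n+1}s_n)^{--}$ reaches the same single-shift comparison, with the advantage that you only need the inequality $\lambda_n\leq L$ (you have $\rho_n$ letters of slack) and that it explains $\lambda_n$ as the distance from $\rho_{n+1}$ to the end of the palindromic window of length $q_{n+1}+q_n-2$; the price is that tightness no longer falls out automatically, so you must exhibit the two differing letters by hand, which you do correctly as the two letters of $t_n$, one read at position $q_{n+1}+q_n-2$ just past the palindrome via the prefix $s_{n+1}s_n$, the other read at position $b_{n+1}q_n-1$ inside $s_n^{a_{n+1}}$. For (3), your Ostrowski-condition contradiction is valid but the paper's direct bound $\lambda_n\geq q_n-1$, obtained from $\rho_{n+1}<q_{n+1}$, is shorter and quantitative. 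One caveat that applies to your letter-level argument slightly more than to the paper's: for $n=1$ with $a_1=1$ the word $s_1$ reduces to a single letter and $t_1$ is not defined, so the degenerate small-index cases deserve the same separate treatment the paper gives them in Corollary 1.
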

	
\begin{proof}
1) Let $m=q_n-1\in I_n^0$. By theorem $6$, the word $T^{b_{n+1}q_n}(c_\alpha)$ turns $a_{n+1}-b_{n+1}$ times around the referent cycle, and then turns around the non-referent cycle. This shows that the words 
\begin{center}
	$T^{b_{n+1}q_n}(c_\alpha)$ \quad and \quad $c_\alpha$
\end{center}
share the same prefix of length
\begin{center}
	$m+(a_{n+1}-b_{n+1})q_n+r$
\end{center}
where $r$ is the length of the common part of the two cycles of $G_m$. Since $m=q_n-1$, every vertex of the non-referent cycle belongs to the referent cycle. This implies that $r=q_{n-1}-1$, and the two words $T^{b_{n+1}q_n}(c_\alpha)$ and $c_\alpha$ share the same prefix of length $m+(a_{n+1}-b_{n+1})q_n+r=q_n-1+(a_{n+1}-b_{n+1})q_n+q_{n-1}-1$. This shows that the two words
\begin{center}
	$T^{\rho_n}(T^{b_{n+1}q_n}(c_\alpha))=T^{\rho_{n+1}}(c_\alpha)$ \quad and \quad $T^{\rho_n}(c_\alpha)$
\end{center}
share the same prefix of length $q_n+(a_{n+1}-b_{n+1})q_n+q_{n-1}-2-\rho_n=q_{n+1}+q_n-\rho_{n+1}-2=\lambda_n$.

2) If $b_{n+1}\neq0$ then the longest common prefix of the words $T^{b_{n+1}q_n}(c_\alpha)$ and $c_\alpha$ has length $q_n-1+(a_{n+1}-b_{n+1})q_n+q_{n-1}-1$. So that the length of the longest common prefix of $T^{\rho_{n+1}}(c_\alpha)$ and $T^{\rho_n}(c_\alpha)$ indeed equals $\lambda_n$.

3) We have $\lambda_{n+1}-\lambda_n=q_{n+2}+q_{n+1}-q_{n+1}-q_n-(\rho_{n+2}-\rho_{n+1})=(a_{n+2}-b_{n+2})q_{n+1}\geq 0$ so that the sequence $(\lambda_n)$ is increasing. Since $\rho_{n+1}<q_{n+1}$, we get :
\begin{center}
	$\lambda_n\geq q_n-1$
\end{center}
and this shows that $\lambda_n \rightarrow +\infty$ when $n\rightarrow +\infty$.
\end{proof}
	
\begin{remark}
Notice that in the case where $\rho_{n+1}=q_{n+1}-1$ then $\lambda_n= q_n-1$ and the lower bound for $(\lambda_n)$ found in the proof of $3)$ is optimal. However, the sequence $(q_n-1)_{n\geq 1}$ does not defines a formal intercept.
\end{remark}
	
\begin{defi}
Let $\rho$ be a formal intercept of the slope $\alpha$. We define the Sturmian word $T^{\rho}(c_\alpha)$ of slope $\alpha$ and formal intercept $\rho$ as the word
\begin{center}
	$T^{\rho}(c_\alpha)=\lim T^{\rho_n}(c_\alpha)$
\end{center}
having the same prefix of length $q_n-1$ as $T^{\rho_n}(c_\alpha)$ for all $n\geq 1$.
\end{defi}
	
\begin{prop}
Let $\rho$ be a formal intercept of the slope $\alpha$ ans $n\geq 1$. Then the length of the longest common prefix of the words
\begin{center}
	$T^{\rho}(c_\alpha)$ \quad and \quad $T^{\rho_n}(c_\alpha)$
\end{center}
equals $\lambda_N$, where $N$ is the smallest integer $N\geq n$ such that $b_{N+1}\neq 0$. If no such $N$ exists, then they are equal.
\end{prop}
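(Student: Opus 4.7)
The plan is to reduce the statement to the case $n=N$, then establish the lower bound $\lambda_N$ and the matching upper bound separately using the preceding proposition, after first disposing of the degenerate case.

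First I would handle the case where no such $N$ exists. Then $b_{k+1}=0$ for every $k\geq n$, so $\rho_m=\rho_n$ for all $m\geq n$ and $T^{\rho_m}(c_\alpha)=T^{\rho_n}(c_\alpha)$ for every such $m$; the definition of $T^{\rho}(c_\alpha)$ as a limit then immediately yields $T^{\rho}(c_\alpha)=T^{\rho_n}(c_\alpha)$. Assuming from now on that $N$ exists, the same reasoning applied to the indices $n,n+1,\dots,N$ (all of which satisfy $b_{k+1}=0$) gives $\rho_n=\rho_N$ and $T^{\rho_n}(c_\alpha)=T^{\rho_N}(c_\alpha)$, so without loss of generality I may replace $n$ by $N$ and assume $b_{N+1}\neq 0$.

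For the lower bound, I would iterate part $1)$ of the preceding proposition: each consecutive pair $T^{\rho_k}(c_\alpha)$, $T^{\rho_{k+1}}(c_\alpha)$ agrees on its first $\lambda_k$ letters, and since $(\lambda_k)$ is increasing, $T^{\rho_N}(c_\alpha)$ and $T^{\rho_m}(c_\alpha)$ agree on their first $\lambda_N$ letters for every $m\geq N$. Passing to the limit, $T^{\rho}(c_\alpha)$ and $T^{\rho_N}(c_\alpha)$ share a prefix of length at least $\lambda_N$. For the matching upper bound, I would use part $2)$ applied at $N$ (this is where $b_{N+1}\neq 0$ is needed): the words $T^{\rho_N}(c_\alpha)$ and $T^{\rho_{N+1}}(c_\alpha)$ disagree at position $\lambda_N+1$. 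Running the same iteration starting at $N+1$, every $T^{\rho_m}(c_\alpha)$ with $m\geq N+1$ agrees with $T^{\rho_{N+1}}(c_\alpha)$ on its first $\lambda_{N+1}$ letters, in particular at position $\lambda_N+1$ provided $\lambda_{N+1}>\lambda_N$. Taking the limit, $T^{\rho}(c_\alpha)$ also agrees with $T^{\rho_{N+1}}(c_\alpha)$ at position $\lambda_N+1$, and therefore disagrees with $T^{\rho_N}(c_\alpha)$ there, forcing the longest common prefix to have length exactly $\lambda_N$.

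The only subtle step is the strict inequality $\lambda_{N+1}>\lambda_N$ used in the upper bound. From the identity $\lambda_{n+1}-\lambda_n=(a_{n+2}-b_{n+2})q_{n+1}$ established in the proof of part $3)$ of the preceding proposition, this amounts to $b_{N+2}<a_{N+2}$. This is exactly where the Ostrowski conditions become decisive: if $b_{N+2}=a_{N+2}$ then the condition $b_{i+1}=a_{i+1}\Rightarrow b_i=0$ applied at $i=N+1$ would force $b_{N+1}=0$, contradicting the minimality of $N$. Hence $b_{N+2}<a_{N+2}$ strictly and the argument closes. This is the main (and only) delicate point; every other step is a direct application of the preceding proposition together with the definition of $T^{\rho}(c_\alpha)$ as a limit.
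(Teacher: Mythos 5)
Your proof is correct and follows the same route as the paper's own (very terse) proof, which simply observes that $\rho_n=\rho_N$ and declares the rest clear: reduce to the case $n=N$ and then apply the preceding proposition. You additionally make explicit the one genuinely non-obvious point the paper glosses over, namely that the disagreement between $T^{\rho_N}(c_\alpha)$ and $T^{\rho_{N+1}}(c_\alpha)$ at position $\lambda_N+1$ survives the passage to the limit because $\lambda_{N+1}>\lambda_N$, which you correctly extract from the Ostrowski condition $b_{N+2}=a_{N+2}\Rightarrow b_{N+1}=0$ together with the minimality of $N$.
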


\begin{proof}
This is clear, since $\rho_n=\rho_k$ for all $n\leq k \leq N$ if such a $N$ exists, and $\rho_n=\rho_k$ for all $n\leq k$ in the second case.
\end{proof}
	
\begin{prop}
Let $x$ be a Sturmian word of slope $\alpha$. Then there exist a unique formal intercept $\rho$ of the slope $\alpha$ such that $x=T^{\rho}(c_\alpha)$.
\end{prop}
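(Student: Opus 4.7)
The plan is to define each coordinate $\rho_n$ by matching the length-$(q_n-1)$ prefix of $x$ against the orbit of $c_\alpha$ under the shift, to verify the compatibility condition between consecutive levels using Proposition 11, and to deduce uniqueness from Proposition 12. For the construction, I fix $n \geq 1$; by Corollary 2 we have $r(c_\alpha, q_n-1) = q_n$, so the $q_n$ prefixes $\mathbb{P}_{q_n-1}(T^j(c_\alpha))$ for $j \in [0, q_n[$ are pairwise distinct, and they exhaust the $p(c_\alpha, q_n-1) = q_n$ factors of length $q_n-1$. Since $x$ shares its set of factors with $c_\alpha$ by Theorem 3, $\mathbb{P}_{q_n-1}(x)$ appears in this list, so there is a unique $\rho_n \in [0, q_n[$ with $\mathbb{P}_{q_n-1}(x) = \mathbb{P}_{q_n-1}(T^{\rho_n}(c_\alpha))$; this defines the candidate sequence $(\rho_n)$.

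The main obstacle is the compatibility $\rho_n \equiv \rho_{n+1} \pmod{q_n}$. Passing to the length-$(q_n-1)$ prefix in the equality $\mathbb{P}_{q_{n+1}-1}(x) = \mathbb{P}_{q_{n+1}-1}(T^{\rho_{n+1}}(c_\alpha))$ immediately gives $\mathbb{P}_{q_n-1}(T^{\rho_n}(c_\alpha)) = \mathbb{P}_{q_n-1}(T^{\rho_{n+1}}(c_\alpha))$, and the heart of the proof is the auxiliary claim
\[
\mathbb{P}_{q_n-1}(T^j(c_\alpha)) = \mathbb{P}_{q_n-1}(T^{j \bmod q_n}(c_\alpha)), \quad \forall j \in [0, q_{n+1}[.
\]
To prove it I would write $j$ in its Ostrowski expansion $j = \sum_{i=0}^{n} b_{i+1} q_i$ (Proposition 9), extend it by $b_i = 0$ for $i \geq n+2$ (the Ostrowski conditions from Proposition 8(ii) remain satisfied, since $a_i \geq 1$ makes the critical implication vacuous), and consider the resulting formal intercept $\rho$. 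Its truncations are $\rho_{n+1} = j$ and $\rho_n = j - b_{n+1}q_n = j \bmod q_n$, and Proposition 11(1) then asserts that $T^{\rho_n}(c_\alpha)$ and $T^{\rho_{n+1}}(c_\alpha)$ agree on a common prefix of length $\lambda_n = q_{n+1} + q_n - \rho_{n+1} - 2 \geq q_n - 1$, which is exactly the claim. Combined with the uniqueness from the construction of $\rho_n$, this forces $\rho_{n+1} \equiv \rho_n \pmod{q_n}$, so $(\rho_n)$ is a formal intercept.

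Finally, by the defining property of $T^\rho(c_\alpha)$, this word has the same length-$(q_n-1)$ prefix as $T^{\rho_n}(c_\alpha)$ for every $n$, hence the same length-$(q_n-1)$ prefix as $x$; letting $q_n \to \infty$ yields $x = T^\rho(c_\alpha)$. For uniqueness, if $\rho'$ is any formal intercept with $T^{\rho'}(c_\alpha) = x$, Proposition 12 (applied with the smallest $N \geq n$ such that $b'_{N+1} \neq 0$, or in the degenerate case where all subsequent digits vanish, which gives outright equality) yields $\mathbb{P}_{q_n-1}(x) = \mathbb{P}_{q_n-1}(T^{\rho'_n}(c_\alpha))$ for every $n$, since $\lambda_N \geq q_N - 1 \geq q_n - 1$. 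The uniqueness in the construction of $\rho_n$ then gives $\rho'_n = \rho_n$ for all $n$, i.e.\ $\rho' = \rho$.
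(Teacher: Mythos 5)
Your proof is correct, and while its skeleton matches the paper's (define $\rho_n$ by matching the length-$(q_n-1)$ prefix of $x$ against the shifts $T^j(c_\alpha)$, $j\in[0,q_n[$, then verify the congruences), the key compatibility step is handled by a genuinely different mechanism. The paper establishes $\rho_n\equiv\rho_{n+1}\ (\mathrm{mod}\ q_n)$ by writing $\rho_{n+1}=bq_n+c$ and re-running the Rauzy-graph argument directly: it splits into the cases $b<a_{n+1}$ and $b=a_{n+1}$ and invokes Theorem 6 (the characteristic word turns $a_{n+1}$ times around the referent cycle) to show $T^{\rho_{n+1}}(c_\alpha)$ and $T^c(c_\alpha)$ start at the same vertex of $G_{q_n-1}$. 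You instead prove the cleaner auxiliary statement that $\mathbb{P}_{q_n-1}(T^j(c_\alpha))=\mathbb{P}_{q_n-1}(T^{j\bmod q_n}(c_\alpha))$ for all $j\in[0,q_{n+1}[$, by packaging the Ostrowski expansion of $j$ (padded with zeros, which you correctly check preserves the Ostrowski conditions) into a formal intercept and quoting Proposition 11(1) together with the bound $\lambda_n\ge q_n-1$. Since Proposition 11 was itself proved via Theorem 6, the two arguments rest on the same underlying fact, but yours avoids repeating the cycle-counting case analysis and makes the quantitative prefix-agreement result do the work; the paper's version is more self-contained at this point but essentially duplicates reasoning already carried out. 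Your justification that the $q_n$ prefixes $\mathbb{P}_{q_n-1}(T^j(c_\alpha))$, $j\in[0,q_n[$, exhaust all factors of length $q_n-1$ (via $r(c_\alpha,q_n-1)=q_n=p(c_\alpha,q_n-1)$ from Corollary 2) is an equivalent restatement of the paper's observation that every vertex of $G_{q_n-1}$ lies on the referent cycle, and the uniqueness argument is the same in both.
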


\begin{proof}
We consider the sequence, defined for $n\geq 0$ as :
\begin{center}
	$\rho_n=\min\{k\geq 0 \ | \ x \text{ and } T^k(c_\alpha) \text{ share the same prefix of length } q_n-1\}$
\end{center}
and show that $\rho=(\rho_n)_{n\geq 1}$ is a formal intercept. Let $n\geq 1$ and $m=q_n-1\in I_n^0$. Since the referent cycle is of length $q_n=m+1$, every vertex of $G_m$ is in the referent cycle. This shows that $0\leq\rho_n<q_n$. Since $T^{\rho_{n+1}}(c_\alpha)$, $T^{\rho_{n}}(c_\alpha)$ and $x$ share the same prefix of length $q_n-1$, the paths they define start at the same vertex.

Write $\rho_{n+1}=bq_n+c$ with $c<q_n$. Since $\rho_{n+1}=bq_n+c<q_{n+1}=a_{n+1}q_n+q_{n-1}$, we have $b\leq a_{n+1}$ and if $b= a_{n+1}$ then $c< q_{n-1}$. Since the caracteristic word $c_\alpha$ turns $a_{n+1}$ times around the referent cycle, if $b<a_{n+1}$ then $T^{\rho_{n+1}}(c_\alpha)$ and $T^{c}(c_\alpha)$ start at the same vertex, and hence share the same prefix of length $q_n-1$. Since the referent cycle is of length $q_n$, and that $\rho_n<q_n$ we must have $c=\rho_n$. In the case where $b=a_{n+1}$, then $c<q_{n-1}$ so that $T^{\rho_{n+1}}(c_\alpha)$ starts in the common part of the two cycles of $G_m$, $T^{\rho_{n+1}}(c_\alpha)$ and $T^{c}(c_\alpha)$ start at the same vertex, which is on the referent cycle, and we again must have $\rho_n=c$. Thus $\rho_n=\rho_{n+1} $\texttt{ [mod $q_n$]} and we are done.

For unicity, notice that since for $m=q_n-1$ the referent cycle is of length $q_n$, there must be only one $k<q_n$ such that $T^{k}(c_\alpha)$ and $T^{\rho}(c_\alpha)$ share the same prefix of length $q_n-1$, and since $\rho_n$ is such a $k$, every formal intercept $\gamma$ such that $x=T^{\gamma}(c_\alpha)$ must satisfy $\gamma_n=\rho_n$.
\end{proof}

Example : One can compute easily that the infinite words $0c_\alpha$ and $1c_{\alpha}$ have respective formal intercepts $\sum_{i\geq 0} a_{2i+2}q_{2i+1}$ and $(a_1-1)+\sum_{i\geq 1} a_{2i+1}q_{2i}$.

\underline{\textit{Remark :}}
In a future paper we will investigate more properties of formal intercepts.

\end{document}